\theoremstyle{plain}
\newtheorem{theorem}{Theorem}[section]
\newtheorem{definition}[theorem]{Definition}
\newtheorem{lemma}[theorem]{Lemma}
\newtheorem{prop}[theorem]{Proposition}
\numberwithin{equation}{section}
\newcommand\R{\mathbb R} 
\newcommand\N{\mathbb N} 
\newcommand\Z{\mathbb Z}
\renewcommand\div{\mbox{div}}
\newcommand{\dist}{{ \rm dist}}
\newcommand\p{\partial}
\newcommand\wh{\widehat}
\newcommand\e{\varepsilon}
\renewcommand{\epsilon}{\e}
\renewcommand\H{\mathscr H}
\newcommand{\I}[1]{\chi_{\{#1>0\}}}
\newcommand{\po}[1]{\{#1>0\}}
\newcommand{\fb}[1]{\partial\{#1>0\}}
\newcommand\Om{\Omega}
\newcommand\na{\nabla}
\tikzstyle arrowstyle=[scale=1]
\tikzstyle directed=[postaction={decorate,decoration={markings,
    mark=at position .65 with {\arrow[arrowstyle]{stealth}}}}]
\tikzstyle reverse directed=[postaction={decorate,decoration={markings,
    mark=at position .65 with {\arrowreversed[arrowstyle]{stealth};}}}]
\thanks{2010 Mathematics Subject Classification:
31A30, 31B30, 35R35.\\ Keywords: Biharmonic operator,
singular perturbation problems,
monotonicity formula.}
\begin{document}

\title[Limit behaviour of a singular perturbation problem]{Limit behaviour of a singular perturbation problem \\
for  the biharmonic operator}\thanks{The first and third authors are member of INdAM. This work has been supported by the Australian
Research Council Discovery Project DP170104880
NEW ``Nonlocal Equations at Work'',
the Australian Research Council DECRA DE180100957
``PDEs, free boundaries and applications'' and the Fulbright Foundation.}

\author{Serena Dipierro}
\address{Serena Dipierro: Department of Mathematics
and Statistics,
University of Western Australia,
35 Stirling Hwy, Crawley WA 6009, Australia}
\email{serena.dipierro@uwa.edu.au}

\author{Aram L. Karakhanyan}
\address{Aram Karakhanyan:
School of Mathematics, The University of Edinburgh,
Peter Tait Guthrie Road, EH9 3FD Edinburgh, UK}
\email{aram.karakhanyan@ed.ac.uk}

\author{Enrico Valdinoci}
\address{Enrico Valdinoci:
Department of Mathematics
and Statistics,
University of Western Australia,
35 Stirling Hwy, Crawley WA 6009, Australia}
\email{enrico.valdinoci@uwa.edu.au}

\begin{abstract} We study here a singular perturbation
problem of biLaplacian type, which
can be seen as the
biharmonic counterpart of classical combustion models.

We provide different results, that include
the convergence to a free boundary problem driven by a biharmonic
operator, as introduced in~\cite{BIHA}, and
a monotonicity formula in the plane.
For the latter result, an important tool is provided by an integral identity
that is satisfied by solutions of the singular perturbation
problem.

We also investigate
the quadratic behaviour of
solutions near the zero level set, at least
for small values of the perturbation parameter.

Some counterexamples to the uniform regularity are also provided
if one does not impose some structural assumptions on the forcing term.
\end{abstract}

\date{\hbox{\today}}
\maketitle

\setcounter{tocdepth}{2}
\tableofcontents

\section{Introduction}
\label{sec-intro}  

In this article we study bounded solutions $\{u^\e\}_{\e>0}$
of the singularly perturbed biLaplacian equation
\begin{equation}\label{eq-sing-pert}
2\Delta^2 u^\e=-\beta_\e\left( {u^\e}\right) \quad {\mbox{ in }}\Omega,
\end{equation}
where~$\e\in(0,1]$ is a small parameter, $\Omega$ is a smooth and bounded
domain of~$\R^n$,
\begin{equation}\label{betaeps}
\beta_\epsilon(t):=
\frac1{\epsilon}\,\beta\left(\frac{t}{\epsilon}\right),\end{equation}
and~$\beta$ is a smooth, nonnegative function,
with support contained in~$[0, 1]$ 
and such that 
\begin{equation}\label{lambda}\int_{\R}\beta(t)\,dt=
\int_{0}^1\beta(t)\,dt =1.\end{equation}

Equation~\eqref{eq-sing-pert} can be seen as the
biharmonic counterpart of classical combustion models,
see e.g.~\cite{MR1900562}.
We observe that the problem in~\eqref{eq-sing-pert}
is variational, and indeed
solutions of~\eqref{eq-sing-pert}
are critical points of the functional
\begin{equation}\label{DOBB}
J_\e[v]:=\int_\Om |\Delta v(x)|^2 +\mathcal B_\e\big(v(x)\big)\,dx,
\end{equation}
where
\begin{equation}\label{bepsilon}
\mathcal B_\e(v):=\int_0^v\beta_\e(t)\,dt.
\end{equation}
The factor~$2$ in equation~\eqref{eq-sing-pert} has been
placed exactly to avoid additional factors~$1/2$ in the energy functional~\eqref{DOBB}
(and thus to make the comparison with the existing literature
more transparent).
As a special example, one can consider minimizers
of~$J_\e$
with respect to Navier boundary conditions,
that is, given~$u_0\in W^{2,2}(\Omega)$, one can minimize~$J_\e$
among the
set of competitors
given by
$$ {\mathcal{A}}:=\Big\{ u\in W^{2,2}(\Omega) {\mbox{ s.t. }} u-u_0\in W^{1,2}_0(\Omega)
\Big\}.$$
Then, minimizers of~\eqref{DOBB}
are taken in the class~${\mathcal{A}}$ and they
are solutions
of~\eqref{eq-sing-pert} with boundary data~$u=u_0$ and~$\Delta u=0$
along~$\partial\Omega$. 
See for instance the ``hinged problem'' on the right hand side of Figure 1(a)
and on page 84 of~\cite{SW}, or Figure 1.5 on page 6 of~\cite{GANGULI},
or the monograph~\cite{GAZ} for further information of this type
of boundary conditions.

The existence of minimizers of the functional in~\eqref{DOBB}
in the class~${\mathcal{A}}$
is obtained by the direct methods in the calculus of variations,
see Lemma~2.1 in~\cite{BIHA}.\medskip

Some motivations for investigating equations involving the
biharmonic operator come from
classical models 
for rigidity problems, which have concrete applications,
for example, in the construction of suspension bridges,
see e.g.~\cite{MR866720}
and the references therein. See also
formula (1) in~\cite{MR3512704} and the references therein for other
classical applications of the biharmonic
operator in the study of steady state incompressible fluid flows at
small Reynolds numbers under the Stokes flow approximation assumption.
In our framework, we will present a simple game-theoretical model
for the problem in~\eqref{eq-sing-pert} in Section~\ref{sec:mot}.
\medskip

The minimizers of~$J_\e$ enjoy suitable regularity and compactness properties,
and they are related to a free boundary problem of biharmonic type
which has been recently investigated in~\cite{BIHA}.
To formalize this, we consider the functional
\begin{equation}\label{J}
J[v]:= \int_\Om |\Delta v(x)|^2 +\chi_{(0,+\infty)}\big(v(x)\big)\,dx.\end{equation}

Though free boundary problems are by now a classical topic of investigation
(see~\cite{MR618549}), 
the setting of higher order operators provides only few results available, and
the analysis of the free boundary problem in~\eqref{J} has been only recently
initiated in~\cite{BIHA} (see also~\cite{mawi} where
other types of free boundary problems for higher order operators
have been considered). 
Furthermore, obstacle problems
involving biharmonic operators have been studied
in~\cite{caffa, MR620427, MR705233, pozzo, novaga1, novaga2}.

In this setting, one can relate minimizers of the functional~$J_\e$
in~\eqref{DOBB} with minimizers of the free boundary problem in~\eqref{J},
according to the following convergence result:

\begin{theorem}\label{thm-eps-compactness}
Let $\{u^\e\}$ be a family of minimizers of the functional~$J_\e$, as defined
in~\eqref{DOBB}, with
$$ \sup_{\e\in(0,1]} \|u^\e\|_{L^\infty(\Omega)}<+\infty
. $$
Then, as~$\e\to0^+$, up to a subsequence,
\begin{itemize}
\item $u^{\e}\to u$ locally uniformly in $C^{1, \alpha}_{{\rm loc}}(\Om)$ for any~$\alpha\in(0,1)$,
\item $u^{\e}\to u$ in $W^{2, p}_{{\rm loc}}(\Om),$ for every $p>2$,
\item $\Delta u^{\e}\to \Delta u$ in $BMO$,
\item $u$ is a minimizer of the functional~$J$, as defined in~\eqref{J}.
\end{itemize}
\end{theorem}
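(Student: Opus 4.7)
The plan splits into three phases: uniform estimates, subsequential compactness, and identification of the limit as a minimizer of~$J$. For the first phase, the minimality of~$u^\e$ combined with the elementary bound $0\le\mathcal B_\e\le 1$ (which follows from~\eqref{lambda} since $\beta\ge 0$ is supported in~$[0,1]$) gives, by comparison with the fixed datum~$u_0$,
$$ \int_\Om |\Delta u^\e|^2\,dx \le J_\e[u^\e] \le J_\e[u_0] \le \int_\Om |\Delta u_0|^2\,dx + |\Om|, $$
uniformly in~$\e$. Together with the standing $L^\infty$ bound and the prescribed boundary data this yields uniform $W^{2,2}(\Om)$ control. To upgrade to $W^{2,p}_{\rm loc}$ for $p>2$, I would bootstrap from the Euler--Lagrange equation~\eqref{eq-sing-pert}: the key is to bound $\int_K \beta_\e(u^\e)\,dx$ on compact subsets~$K\Subset\Om$, which reduces to a non-degeneracy-type estimate $|\{0<u^\e<\e\}\cap K|\le C\e$. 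After this, Calder\'on--Zygmund theory applied to $\Delta^2 u^\e=-\tfrac12\beta_\e(u^\e)$ yields the uniform~$W^{2,p}_{\rm loc}$ bound and, via Sobolev embedding, the uniform~$C^{1,\alpha}_{\rm loc}$ bound.

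With these estimates in hand, Rellich--Kondrachov and Arzel\`a--Ascoli furnish, along a subsequence, a limit~$u$ such that $u^\e\to u$ strongly in~$C^{1,\alpha}_{\rm loc}$ and in~$W^{2,p}_{\rm loc}$ for every $p>2$ and $\alpha\in(0,1)$. The $BMO$ convergence of~$\Delta u^\e$ would then follow by combining the strong $L^p_{\rm loc}$ convergence (from the $W^{2,p}_{\rm loc}$ step) with a uniform $BMO_{\rm loc}$ bound, via a standard weak-star compactness argument in the duality between $BMO$ and the Hardy space~$H^1$.

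To show that~$u$ minimizes~$J$, I would use a $\Gamma$-convergence-style argument. The lower bound $J[u]\le\liminf_{\e\to 0} J_\e[u^\e]$ follows from strong convergence of~$\Delta u^\e$ in $L^2$ for the Dirichlet part and from Fatou's lemma applied to the pointwise convergence $\mathcal B_\e(u^\e)\to\chi_{(0,+\infty)}(u)$ a.e.\ on~$\{u\ne 0\}$ for the potential part. For the upper bound, given any admissible competitor~$v\in\mathcal A$, the sequence $v_\e\equiv v$ is already a recovery sequence: dominated convergence (using $\mathcal B_\e\le 1$ on the bounded domain~$\Om$) together with the pointwise limit $\mathcal B_\e(t)\to\chi_{(0,+\infty)}(t)$ for every $t\in\R$ gives $J_\e[v]\to J[v]$. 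Combining $J_\e[u^\e]\le J_\e[v]$ with the two bounds yields $J[u]\le J[v]$.

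The principal obstacle is the uniform $W^{2,p}$ estimate with the singular right-hand side: since $\beta_\e(u^\e)$ blows up pointwise as $\e\to 0$, one cannot use a direct pointwise bound and must instead exploit cancellations coming from the structure of~$\beta_\e$ together with the minimality. The crucial non-degeneracy measure bound on~$\{0<u^\e<\e\}$ is the technical heart of the argument, and is where I would expect to invoke (or derive in parallel) auxiliary regularity results analogous to those in~\cite{BIHA} for the limit free boundary problem~\eqref{J}.
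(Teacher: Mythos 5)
Your Phase 3 (identification of the limit as a minimizer of $J$) is correct and essentially the same $\Gamma$-convergence-style argument as the paper's Lemma~3.3, with one nice simplification: you prove the $\Gamma$-limsup ingredient $J_\e[v]\to J[v]$ for fixed $v\in\mathcal A$ by dominated convergence from the pointwise limit $\mathcal B_\e(v(x))\to\chi_{(0,\infty)}(v(x))$, whereas the paper goes through a quasicontinuity argument to estimate $|\{0<v<\e\}\cap\Omega|$; both are correct but your route is more elementary. The $\Gamma$-liminf via Fatou and lower semicontinuity of $\int|\Delta\cdot|^2$ also matches.

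The genuine gap is in Phase 1. You propose to derive the uniform $W^{2,p}_{\rm loc}$ and $C^{1,\alpha}_{\rm loc}$ bounds by first establishing a non-degeneracy measure bound $|\{0<u^\e<\e\}\cap K|\le C\e$ and then applying Calder\'on--Zygmund theory to $\Delta^2 u^\e=-\tfrac12\beta_\e(u^\e)$. This cannot work. Even granting the measure bound (which this paper does not prove and in fact postulates as a hypothesis in Theorem~1.4), the right-hand side is of size $\|\beta\|_\infty/\e$ on a set of measure $\sim\e$, so
\[
\|\beta_\e(u^\e)\|_{L^p(K)}^p\le \frac{\|\beta\|_\infty^p}{\e^p}\,\big|\{0<u^\e<\e\}\cap K\big|\sim \e^{1-p}\longrightarrow\infty\qquad\text{for every }p>1,
\]
so Calder\'on--Zygmund gives no uniform $W^{4,p}_{\rm loc}$, hence no uniform $W^{2,p}_{\rm loc}$, for any $p>1$. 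Likewise, knowing $W^{2,p}_{\rm loc}$ uniformly for all $p<\infty$ does not produce the $BMO$ bound on $\Delta u^\e$ that you invoke afterward, and your proposal leaves no independent mechanism for it. The paper's (correct) route runs the other way: the primary uniform estimate is $\Delta u^\e\in BMO_{\rm loc}$ uniformly in $\e$, obtained not from the Euler--Lagrange equation but from the variational structure, by the harmonic-replacement/Campanato-decay argument of Theorem~1.1 in~\cite{BIHA}. The essential input there is precisely the bound $0\le\mathcal B_\e\le 1$: the potential term contributes at most $|B_r|$ to the energy on any ball $B_r$, which is the correct scaling for a BMO/Campanato estimate for $\Delta u^\e$, and this is uniform in $\e$. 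The $W^{2,p}_{\rm loc}$ and $C^{1,\alpha}_{\rm loc}$ bounds then follow from the BMO bound via John--Nirenberg and Sobolev embedding, and the compactness/identification phases proceed as you describe.
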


\medskip

We observe that solutions of~\eqref{eq-sing-pert},
and in particular minimizers of~$J_\e$, naturally develop a notion of limit
free boundary. Indeed, if~$u^\e$ 
is a minimizer of~$J_\e$ which approaches~$u$ as~$\e\to0^+$,
one is interested in the geometric properties of the set~$\partial\{ u^\e>0\}$.
To analyze and classify this type of sets, it would be extremely desirable to have suitable
monotonicity formulas. 
Differently from the classical case in which the equation is of second order
(i.e., the energy functional is induced by the classical Dirichlet form,
see~\cite{MR618549}),
in our setting no general result of this type is available in the literature.

In our framework, we will obtain a monotonicity formula, relying 
on the following integral equation for
solutions of~\eqref{eq-sing-pert}:

\begin{lemma} \label{IDENDOM}
Let~$u^\e$ be
a solution of~\eqref{eq-sing-pert}. Then,
for any~$\phi=(\phi^1,\dots,\phi^n)\in C^\infty_0(\Omega,\R^n)$,
\begin{equation}\label{DOMAIN} 2\int_\Omega  \Big(2\,{\rm tr} \big(D^2u^\e(y)\,D\phi(y)\big)
+\nabla u^\e(y)\cdot\Delta\phi(y)
\Big)\,\Delta u^\e(y)\,dy
=\int_\Omega {\rm div}\phi(y)\,\Big( |\Delta u^\e(y)|^2+ \mathcal B_\e\big(u^\e(y)\big)\Big)
\,dy.\end{equation}
\end{lemma}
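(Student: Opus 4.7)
The plan is to derive the identity by testing the PDE~\eqref{eq-sing-pert} against the scalar function $\nabla u^\e\cdot\phi$ (a Pohozaev-type multiplier) and performing two successive integrations by parts; since $\phi$ is compactly supported in $\Omega$, no boundary terms appear. Standard elliptic regularity for $\Delta^2$ ensures that $u^\e$ is smooth enough to justify all of the manipulations below.

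First I would handle the right-hand side. Multiplying $2\Delta^2 u^\e=-\beta_\e(u^\e)$ by $\nabla u^\e\cdot\phi$ and integrating, the chain rule $\beta_\e(u^\e)\,\nabla u^\e=\nabla\mathcal B_\e(u^\e)$ together with one integration by parts yields
\begin{equation*}
-\int_\Omega \beta_\e(u^\e)\,(\nabla u^\e\cdot\phi)\,dy=-\int_\Omega \nabla\mathcal B_\e(u^\e)\cdot\phi\,dy
=\int_\Omega \mathcal B_\e(u^\e)\,{\rm div}\,\phi\,dy.
\end{equation*}

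Next I would rewrite the left-hand side using the self-adjointness of $\Delta^2$ on compactly supported test functions: $\int \Delta^2 u^\e\cdot(\nabla u^\e\cdot\phi)=\int \Delta u^\e\cdot\Delta(\nabla u^\e\cdot\phi)$. The key calculation is the Leibniz-type expansion
\begin{equation*}
\Delta(\nabla u^\e\cdot\phi)=\phi\cdot\nabla(\Delta u^\e)+2\,{\rm tr}\bigl(D^2u^\e\,D\phi\bigr)+\nabla u^\e\cdot\Delta\phi,
\end{equation*}
which comes from writing $\Delta(\partial_i u^\e\cdot\phi^i)=\phi^i\,\partial_i\Delta u^\e+2\sum_j\partial_i\partial_j u^\e\,\partial_j\phi^i+\partial_i u^\e\,\Delta\phi^i$ and summing over $i$, using the symmetry of $D^2u^\e$ to recognize the cross term as $2\,{\rm tr}(D^2u^\e\,D\phi)$. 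Multiplying by $\Delta u^\e$ and integrating, the only term that still needs processing is
\begin{equation*}
\int_\Omega \Delta u^\e\,\phi\cdot\nabla(\Delta u^\e)\,dy=\tfrac12\int_\Omega \phi\cdot\nabla|\Delta u^\e|^2\,dy=-\tfrac12\int_\Omega |\Delta u^\e|^2\,{\rm div}\,\phi\,dy,
\end{equation*}
again by integration by parts with $\phi$ compactly supported.

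Combining these pieces, the $\phi\cdot\nabla(\Delta u^\e)$ contribution produces the $|\Delta u^\e|^2\,{\rm div}\,\phi$ term on the right of~\eqref{DOMAIN}, while the $\mathcal B_\e(u^\e)\,{\rm div}\,\phi$ term comes from the Pohozaev treatment of the nonlinearity. Rearranging gives exactly the claimed identity. The only step with any real bookkeeping is the expansion of $\Delta(\nabla u^\e\cdot\phi)$ and the identification of the mixed second-derivative sum with $2\,{\rm tr}(D^2 u^\e\,D\phi)$; everything else is elementary integration by parts, so no serious obstacle is expected.
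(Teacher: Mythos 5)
Your proof is correct and uses essentially the same ingredients as the paper's: the chain rule $\beta_\e(u^\e)\nabla u^\e=\nabla\mathcal B_\e(u^\e)$, the self-adjointness of $\Delta^2$ against the compactly supported multiplier $\nabla u^\e\cdot\phi$, the Leibniz expansion of $\Delta(\nabla u^\e\cdot\phi)$, and conversion of $\Delta u^\e\,\phi\cdot\nabla\Delta u^\e$ into $-\tfrac12|\Delta u^\e|^2\,{\rm div}\,\phi$. The only difference is cosmetic organization (you work forward from the tested PDE, whereas the paper starts from the divergence of $(|\Delta u^\e|^2+\mathcal B_\e(u^\e))\phi$ and then invokes the PDE), so this is the same argument.
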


With this, 
the argument leading to the monotonicity formula is based on
the choice
of a test function~$\phi$ in~\eqref{DOMAIN} with a particular form,
see~\cite{MR1620644}.
More precisely, we focus on the two-dimensional case
and we prove the following 

\begin{theorem}\label{lemma:F}
Let~$n=2$ and~$\tau>0$ such that~$B_\tau\Subset\Omega$.
Let~$u^\e$
be a solution of~\eqref{eq-sing-pert}, with
\begin{equation}\label{45:190}
u^\e(0)=0\qquad{\mbox{and}}\qquad\nabla u^\e(0)=0.
\end{equation}
Then, there exists a function~$E^\e:(0,\tau)\to\R$, which is bounded
in~$(0,\tau)$,
nondecreasing and
such that, for any~$\tau_2>\tau_1>0$,
\begin{equation}\label{MONOFORMULA}
E^\e(\tau_2)-E^\e(\tau_1)=
\int_{\tau_1}^{\tau_2}\left\{
\frac1{r^2}
\int_{\p B_r}\left[\left( \frac{u^\e_{\theta r}}r-\frac{2 u^\e_r}{r^2} \right)^2+
\left(u^\e_{rr}-\frac{3u^\e_r}r+4\frac{ u^\e}{r^2}\right)^2\right]\right\}.\end{equation}
\end{theorem}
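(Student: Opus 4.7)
The strategy follows Weiss's monotonicity approach~\cite{MR1620644}, adapted to the biharmonic setting via the integral identity~\eqref{DOMAIN}: plug a radial test field that localizes to a ball into~\eqref{DOMAIN}, extract a pointwise-in-$r$ differential identity, and complete squares. Specifically, I would apply Lemma~\ref{IDENDOM} with $\phi(y)=\eta(|y|)\,y$, where $\eta\in C^\infty_c([0,\infty))$ is a smooth cutoff approximating $\chi_{[0,r]}$. A direct computation yields ${\rm div}\,\phi=2\eta+|y|\eta'$, ${\rm tr}(D^2 u^\e\,D\phi)=\eta\,\Delta u^\e+|y|\eta'\,u^\e_{rr}$, and $\Delta\phi^i=y_i(\eta''+3\eta'/|y|)$, where I use the polar identities $y^\top D^2u^\e\,y=r^2 u^\e_{rr}$ and $\nabla u^\e\cdot y=r u^\e_r$ with $r=|y|$. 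Passing $\eta\nearrow\chi_{[0,r]}$, so that $\eta'$ and $\eta''$ become $-\delta_r$ and $-\delta'_r$ distributionally on $(0,\infty)$, converts~\eqref{DOMAIN} into the differential identity
\begin{equation*}
2\!\int_{B_r}\!\!\bigl(|\Delta u^\e|^2-\mathcal B_\e(u^\e)\bigr)+r\!\int_{\partial B_r}\!\!\bigl(|\Delta u^\e|^2+\mathcal B_\e(u^\e)\bigr)=4r\!\int_{\partial B_r}\!u^\e_{rr}\Delta u^\e+4\!\int_{\partial B_r}\!u^\e_r\Delta u^\e-2r\frac{d}{dr}\!\int_{\partial B_r}\!u^\e_r\Delta u^\e,
\end{equation*}
relating the volume integral of $|\Delta u^\e|^2+\mathcal B_\e(u^\e)$ on $B_r$ to boundary integrals on $\partial B_r$.

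Motivated by the natural scaling $u^\e(x)\sim|x|^2$ of the biLaplacian free-boundary problem, I would then search for $E^\e(r)$ of the form
\begin{equation*}
E^\e(r)\,=\,r^{-2}\!\!\int_{B_r}\!\!\bigl(|\Delta u^\e|^2+\mathcal B_\e(u^\e)\bigr)+\sum_k c_k\,r^{-k}\!\!\int_{\partial B_r}\!F_k(u^\e),
\end{equation*}
with $F_k$ ranging over the quadratic expressions $u^\e_{rr}\Delta u^\e$, $u^\e_r\Delta u^\e$, $u^\e\Delta u^\e$, $u^\e u^\e_r$, $(u^\e)^2$, and exponents $k$ chosen so that each correction is $O(1)$ as $r\to 0^+$ under $u^\e=O(|x|^2)$. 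Differentiating $E^\e$ in $r$, substituting the identity above to eliminate $\int_{B_r}|\Delta u^\e|^2$, expanding $\Delta u^\e=u^\e_{rr}+u^\e_r/r+u^\e_{\theta\theta}/r^2$, and integrating by parts in the angular variable on $\partial B_r$, recasts $(E^\e)'(r)$ as a boundary integral of a quadratic form in $\{u^\e,u^\e_r,u^\e_{rr},u^\e_{\theta r}\}$. The coefficients $c_k$ are then uniquely fixed by requiring this form to equal the target $r^{-2}\int_{\partial B_r}[(u^\e_{\theta r}/r-2u^\e_r/r^2)^2+(u^\e_{rr}-3u^\e_r/r+4u^\e/r^2)^2]$.

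The main obstacle is this matching step, essentially a polynomial identity to be verified after many $\theta$-integrations by parts; it is mechanical but requires careful bookkeeping of the cross-terms, especially those involving $u^\e_{\theta\theta}$ generated by the polar decomposition of $\Delta u^\e$. Once the matching is done, nonnegativity of the integrand yields the monotonicity, and integrating in $r$ produces~\eqref{MONOFORMULA}. Finally, the boundedness of $E^\e$ on $(0,\tau)$ follows from~\eqref{45:190} combined with interior $W^{4,p}$ regularity of $u^\e$ (valid since $\Delta^2u^\e=-\beta_\e(u^\e)/2\in L^\infty$ for each fixed $\e$): Taylor expansion at the origin yields $u^\e(y)=O(|y|^2)$, $\nabla u^\e(y)=O(|y|)$ and $D^2 u^\e(y)=O(1)$ near $0$, so that each summand in $E^\e(r)$ stays bounded for $r\in(0,\tau)$.
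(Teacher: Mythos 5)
Your overall strategy (radial test field $\phi(y)=\eta(|y|)y$, pass to $\eta\to\chi_{[0,r]}$ to extract a differential-in-$r$ identity from~\eqref{DOMAIN}, then specialize to $n=2$, expand $\Delta u^\e$ in polar coordinates, and integrate by parts in $\theta$ to complete squares) is exactly the one the paper uses, and your intermediate computations of ${\rm div}\,\phi$, ${\rm tr}(D^2u^\e\,D\phi)$, $\Delta\phi^i$ and the limiting identity are correct.

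The gap is in the ansatz for $E^\e$. You propose $E^\e(r)=r^{-2}\int_{B_r}(|\Delta u^\e|^2+\mathcal B_\e(u^\e))+\sum_k c_k\,r^{-k}\int_{\partial B_r}F_k(u^\e)$, i.e.\ one rescaled volume term plus boundary integrals, and claim the coefficients $c_k$ can be tuned so that $(E^\e)'(r)$ equals the desired sum of squares on $\partial B_r$. This cannot work as stated. When you differentiate the leading term and substitute your differential identity to eliminate $\int_{B_r}|\Delta u^\e|^2$, a residual \emph{volume} term of size $r^{-3}\int_{B_r}\mathcal B_\e(u^\e)$ remains (equivalently, if one first applies Green's identity $\int_{B_r}|\Delta u^\e|^2=\int_{\partial B_r}\Delta u^\e\,u^\e_r-\int_{\partial B_r}u^\e\,\partial_r\Delta u^\e+\int_{B_r}\Delta^2 u^\e\,u^\e$ and the equation $2\Delta^2u^\e=-\beta_\e(u^\e)$, the residue is $r^{-3}\int_{B_r}\beta_\e(u^\e)u^\e$). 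No choice of boundary-integral corrections $c_k r^{-k}\int_{\partial B_r}F_k(u^\e)$ can absorb a genuine volume integral, so your matching step does not close. The missing ingredient is the accumulated radial integral: the paper's $E^\e$ (see~\eqref{EMMEDE}) contains the additional summand $\int_0^r\rho^{-3}\int_{B_\rho}\beta_\e(u^\e)u^\e\,d\rho$, whose $r$-derivative is precisely the stray volume term, while still being bounded on $(0,\tau)$ for fixed $\e$ thanks to the Taylor bound $u^\e=O(|y|^2)$ coming from~\eqref{45:190}. You would also need to enlarge the family $F_k$ to include the angular quadratic expressions $u^\e_\theta u^\e_{\theta r}$ and $(u^\e_\theta)^2$ generated by the polar decomposition of $\Delta u^\e$; this is minor, but the accumulated integral is essential.
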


Theorem~\ref{lemma:F} can be also made more precise, since
the function~$E^\e$ is given explicitly by
\begin{equation}\label{EMMEDE}\begin{split}&
E^\e(r) = 
\int_{\partial B_r}
\left(\frac{\Delta u^\e\,u^\e_r}{2r^2}-
\frac{5(u^\e_r)^2}{2r^3}
-\frac{\Delta u^\e u^\e}{r^3}+\frac{6u^\e u^\e_r}{r^4}+
\frac{u^\e_{\theta}u^\e_{\theta r}}{r^4}
-\frac{4(u^\e)^2}{r^5}
-\frac{3(u_\theta^\e)^2}{2r^5}
\right)\\&\qquad\qquad\qquad
+
\frac1{4r^2}\int_{B_r} \big( |\Delta u^\e|^2+\mathcal{B}_\e(u^\e)\big)+
\int_{0}^{r}\frac1{\rho^{3}}\int_{B_\rho} \beta_\e(u^\e)\,u^\e
.
\end{split}
\end{equation}

The proof of Theorem~\ref{lemma:F}
is based on a series of careful integration by parts
aimed at spotting suitable integral cancellations, 
which are possible in dimension~$2$. In addition,
some ``high order of differentiability'' terms naturally appear in the computations,
which need to be suitably removed in order to rigorously make sense
of the formal manipulations.
\medskip

In light of Theorem~\ref{lemma:F}, one can pass
to the limit and obtain a monotonicity formula for weak solutions
of the limit free boundary problem in~\eqref{J}. This result
extends the monotonicity formula found in~\cite{BIHA}
for the case of minimizers to the more general setting of weak solutions.
To this end, we introduce the following setting.

\begin{definition} A function
$u\in W^{2, 2}(\Om)$ is said to be a weak solution of the free boundary problem in~\eqref{J}
if 
\begin{eqnarray*}
&& {\mbox{$\Delta^2 u=0$ in $\po u\cup\{u<0\}$,}} \\
&&{\mbox{\eqref{DOMAIN} holds,}}\\
{\mbox{and }}&&{\mbox{$\fb u$ is locally rectifiable, i.e.
$\fb u=M_0\cup\left(\displaystyle\bigcup_{k=1}^\infty M_k\right)$,}}\\&&\qquad{\mbox{ where 
$M_k, k\ge 1$ are $C^1$ hypersurfaces and $\H^{n-1}(M_0)=0$.  }}\end{eqnarray*}
\end{definition}

To formulate next result, we also let 
\begin{equation}\label{la u}\begin{split}&
{\mbox{$u^{\e_j}$ be a sequence of solutions of~\eqref{eq-sing-pert}
satisfying~\eqref{45:190}, with~$\e_j\searrow0$ as~$j\to+\infty$,}}\\&
{\mbox{and $u$ be a limit of $u^{\e_j}$ 
such that $u^{\e_j}\to u\ge 0$ uniformly,}}\end{split}\end{equation}
and we define
\begin{equation}\label{67A02333}
\begin{split}
& E(r) := 
\int_{\partial B_r}
\left(\frac{\Delta u\, u_r}{2r^2}-
\frac{5 u_r^2}{2r^3}
-\frac{\Delta u \, u}{r^3}+\frac{6 u \, u_r}{r^4}+
\frac{ u_{\theta}\, u_{\theta r}}{r^4}
-\frac{4 u^2}{r^5}
-\frac{3 u_\theta^2}{2r^5}
\right)
+
\frac1{4r^2}\int_{B_r} \big( |\Delta  u|^2+\mathcal{\wh B}\big),
\end{split}
\end{equation}
where $ \mathcal{\wh  B}$ is the weak star limit of $\mathcal B_\e(u^\e)$. 
In this setting, we have the following monotonicity formula:

\begin{theorem}\label{thm:strong-lap}
Let $u$ be a weak solution satisfying~\eqref{la u}.
Suppose that
\begin{equation}\label{Cg09}
\lim_{\e_j\to 0} |\{0<u^{\e_j}\le\e_j\}\cap B|=0\end{equation} for every ball $B\Subset \Om$
and 
\begin{equation}\label{QUAD-lap}
\|\Delta u^{\e_j}\|_{L^\infty(\overline{B_\tau})}
\le C
,
\end{equation}
for some~$C>0$ independent of~$j$.

Then,
\begin{equation}\label{STRONG:C}
{\mbox{ $\Delta u^{\e_j}\to \Delta u $ strongly in $L^2_{loc}(B_\tau)$. }}\end{equation}
In particular, 
\begin{equation}\label{STRONG:H}
{\mbox{the
energy identity in~\eqref{DOMAIN} holds for $ u$,  
}}\end{equation}
with $\mathcal B_\e$ replaced by $\mathcal{\wh B}$, the weak star limit of $\mathcal B_{\e_j}$.  

Furthermore,
if 
\begin{equation}\label{QUAD}
|D^2 u(x)|
\le C\, \qquad{\mbox{for any }}x\in B_1,
\end{equation}
then
\begin{equation}\label{STRONG:EE}
{\mbox{for almost every $t\in(0, \tau)$, the function~$ E$ is well defined and non-decreasing.}}
\end{equation}

Moreover, if~$\tau_2>\tau_1>0$ and~$B_{\tau_2}\Subset\Omega$, then
\begin{equation}\label{VSmciiririMONOFORMULA}
E(\tau_2)-E(\tau_1)=
\int_{\tau_1}^{\tau_2}\left\{
\frac1{r^2}
\int_{\p B_r}\left[\left( \frac{u_{\theta r}}r-\frac{2 u_r}{r^2} \right)^2+
\left(u_{rr}-\frac{3u_r}r+4\frac{ u}{r^2}\right)^2\right]\right\}.\end{equation}

In addition,
\begin{equation}\label{VSmciiririMONOFORMULA:2}\begin{split}&
{\mbox{if~$ E$ is constant in~$(0,\tau)$,
then the function~$\displaystyle
-\frac{u_r}{r}+2 \frac{ u}{r^2}$ is constant in~$B_\tau$,}}\\
&{\mbox{and moreover~$u$
is a homogeneous function of degree two in~$B_\tau$. }}\end{split}\end{equation}

Finally, for every sequence $r_k\searrow 0$ there exists a subsequence 
$r_{k_j}$ such that 
\begin{equation}\label{thm:strong-lap-EQ}
{\mbox{the scaled functions $\frac{u(r_{k_j}x)}{r_{k_j}^2}$ either converge to zero or to
a homogeneous function of degree two. }}\end{equation}
\end{theorem}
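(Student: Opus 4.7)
The plan is to prove Theorem~\ref{thm:strong-lap} in four stages. The main obstacle will be the first stage, strong $L^2$ convergence of the Laplacians: the nonlinear term $\beta_{\e_j}(u^{\e_j})$ is the only genuine obstruction to passing~\eqref{DOMAIN} to the limit, and its disappearance requires the interplay of the scaling identity $t\beta_\e(t) = (t/\e)\beta(t/\e)$ with the thinness hypothesis~\eqref{Cg09}. Once strong $L^2$ convergence of $\Delta u^{\e_j}$ is secured, the monotonicity, rigidity and blow-up assertions proceed by standard scaling and ODE arguments.

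\textbf{Stage 1: Strong convergence~\eqref{STRONG:C} and the limit identity~\eqref{STRONG:H}.} The idea is to test~\eqref{eq-sing-pert} for $u^{\e_j}$ against $\eta u^{\e_j}$, with $\eta \in C_0^\infty(B_\tau)$, and integrate by parts twice to obtain
\begin{equation*}
\int \eta\,(\Delta u^{\e_j})^2 = -\tfrac12 \int \eta\,\beta_{\e_j}(u^{\e_j})\,u^{\e_j} - \int \bigl(u^{\e_j}\Delta\eta + 2\nabla\eta\cdot\nabla u^{\e_j}\bigr)\,\Delta u^{\e_j}.
\end{equation*}
The quantity $t\beta_\e(t)$ is uniformly bounded (by $\|\beta\|_\infty$) and supported in $\{0\le t\le \e\}$, so~\eqref{Cg09} forces the first integral on the right to vanish as $\e_j\to0$. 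The second converges by weak--strong pairing, using the bound~\eqref{QUAD-lap} (which yields weak $L^2$ compactness of $\Delta u^{\e_j}$, with limit necessarily $\Delta u$) together with the uniform convergence of $u^{\e_j}$. Since the same identity holds for $u$ itself (using that $u\,\widehat{\mathcal B} = 0$ by~\eqref{Cg09}), we conclude $\|\eta^{1/2}\Delta u^{\e_j}\|_{L^2}^2 \to \|\eta^{1/2}\Delta u\|_{L^2}^2$, upgrading weak to strong convergence; this is~\eqref{STRONG:C}. Each term of~\eqref{DOMAIN} for $u^{\e_j}$ then passes to the limit, giving~\eqref{STRONG:H}.

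\textbf{Stage 2: Monotonicity~\eqref{STRONG:EE}--\eqref{VSmciiririMONOFORMULA}.} With~\eqref{STRONG:H} at hand, $u$ itself satisfies~\eqref{DOMAIN} (with $\mathcal B_\e$ replaced by $\widehat{\mathcal B}$). I would repeat, directly for $u$, the computation behind Theorem~\ref{lemma:F}: differentiate the explicit expression~\eqref{67A02333} in $r$, convert the volume contributions into boundary terms by testing~\eqref{DOMAIN} against a radial dilation field $\phi(x) = \eta(|x|)\,x$, and collect the algebraic cancellations that in the proof of Theorem~\ref{lemma:F} reassembled the nonnegative integrand of~\eqref{MONOFORMULA}. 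The assumption~\eqref{QUAD} that $|D^2u|$ is bounded makes every second-derivative integrand pointwise meaningful and legitimizes the differentiation under the integral sign at a.e.~$r$.

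\textbf{Stages 3--4: Rigidity~\eqref{VSmciiririMONOFORMULA:2} and blow-up~\eqref{thm:strong-lap-EQ}.} If $E$ is constant on $(0,\tau)$, the integrand of~\eqref{VSmciiririMONOFORMULA} must vanish; in particular the second squared term gives the Euler-type ODE $r^2u_{rr} - 3r\,u_r + 4u = 0$ in $r$ (for each fixed $\theta$), with characteristic equation $(s-2)^2=0$ and general solution $u(r,\theta) = g(\theta)r^2 + h(\theta)r^2\log r$; the $|D^2u|$ bound in~\eqref{QUAD} excludes the logarithmic branch, so $u(r,\theta) = r^2 g(\theta)$ is homogeneous of degree two and $-u_r/r + 2u/r^2$ reduces identically to zero. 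For~\eqref{thm:strong-lap-EQ}, given $r_k\searrow 0$ set $u_k(x) := u(r_k x)/r_k^2$; by~\eqref{QUAD} together with $u(0) = \nabla u(0) = 0$, the $u_k$ are uniformly bounded in $W^{2,\infty}_{\rm loc}$, so along a subsequence $u_{k_j} \to u_0$ in $C^{1,\alpha}_{\rm loc}$. A direct change of variables shows $E_{u_k}(s) = E_u(r_k s)$, and since $E_u$ is bounded and monotone, $\lim_{r\to 0^+}E_u(r) =: \ell$ exists and equals $E_{u_0}(s)$ for every $s>0$, making $E_{u_0}$ constant; the rigidity just established then forces $u_0$ to be either identically zero or homogeneous of degree two.
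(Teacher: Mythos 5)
Your overall architecture is sound, and Stages 3--4 (the Euler ODE with double characteristic root $(s-2)^2=0$ to exclude the $r^2\log r$ branch, and the scaled-energy compactness argument) are correct, matching what the paper cites to \cite{BIHA}. Stage 2 is acceptable modulo a justification that the $\Delta^2 u$ terms in Theorem~\ref{lemma:F}'s computation make sense distributionally for the weak solution $u$; the paper sidesteps this by passing to the limit in a modified $\tilde E^{\e_j}$ rather than recomputing for $u$, but your route is defensible once Stage 1 is secured.

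The genuine gap is in Stage 1. After the vanishing of $\tfrac12\int\eta\,\beta_{\e_j}(u^{\e_j})u^{\e_j}$ (correct, via the scaling and \eqref{Cg09}) and the weak--strong pairing (correct, via \eqref{QUAD-lap}), you have shown
\[
\lim_{j\to\infty}\int_\Om\eta(\Delta u^{\e_j})^2 \;=\; -\int_\Om\big(u\Delta\eta + 2\nabla\eta\cdot\nabla u\big)\Delta u,
\]
and weak lower semicontinuity gives $\int\eta(\Delta u)^2 \le \lim_j\int\eta(\Delta u^{\e_j})^2$. To close, you assert that ``the same identity holds for $u$ itself,'' i.e. that the right-hand side equals $\int_\Om\eta(\Delta u)^2$. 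That is precisely the claim that the distributional pairing $\langle \Delta^2 u,\eta u\rangle$ vanishes, and it is not automatic: $u$ only satisfies $\Delta^2 u=0$ away from $\fb u$, so $\Delta^2 u$ is a measure concentrated on the free boundary, and there is no direct way to perform the integration by parts across it. Your parenthetical ``$u\,\wh{\mathcal B}=0$'' does not do the job---in fact $\wh{\mathcal B}=1$ on $\po u$ and $u=0$ elsewhere, so $u\,\wh{\mathcal B}=u$---and even if it were true it would control the wrong limit (the weak-$*$ limit of $\mathcal B_{\e_j}(u^{\e_j})$, not of $\beta_{\e_j}(u^{\e_j})u^{\e_j}$).

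The paper fills this gap by a nontrivial argument that you have skipped: it invokes the Sard theorem in Sobolev spaces to choose an infinitesimal sequence $s_k$ of regular values, integrates by parts on the smooth super-level sets $\{u>s_k\}$ where $\Delta^2 u=0$ is legitimately available, and then shows the resulting boundary contribution $\int_{\{u=s_k\}}\eta\,\p_\nu u\,\Delta u$ tends to zero. That last step is delicate and uses \eqref{QUAD-lap} essentially---one rewrites the integrand using $\Delta u+C\ge 0$, reduces to controlling $\int_{\{u=s_k\}}\eta\,|\nabla u|$, and shows via the divergence theorem that $\int_{\{u=s_k\}}\eta\,\p_\nu u\to 0$. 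Without this level-set analysis and the boundary estimate, the passage from the limit of the $u^{\e_j}$-identity to the identity for $u$ is unjustified, so \eqref{STRONG:C} and hence \eqref{STRONG:H} are not actually established in your write-up.
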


We point out that condition~\eqref{QUAD} ensures that~$ E$ remains bounded
as~$r\to0^+$. 
\medskip

It is interesting to detect the quadratic behaviour of
solutions of~\eqref{eq-sing-pert} near the zero level set, at least
for small values of~$\e$. To this end, we provide this limit result:

\begin{theorem}\label{ALFAGA}
Let $u^{\e}$ be a sequence of solutions to \eqref{eq-sing-pert}
in~$\Omega$. Let
$0\in \Omega$, and~$\alpha$, $\gamma\in\R$. Suppose that
\begin{equation}\label{CONW22}
{\mbox{$u^{\e}$
converge to~$u:=\displaystyle\frac\alpha2\, (x_1)_+^2+\displaystyle\frac\gamma2\,(x_1)_-^2$,
as~$\e\to0^+$, up to a subsequence,
in~$W^{2,2}_{\rm loc}(\Omega)$. }}\end{equation} Then:
\begin{itemize}
\item If~$\alpha$, $\gamma>0$, we have that 
\begin{equation}\label{ARRCLA:1}
\alpha=\gamma.
\end{equation}
\item If~$\alpha$, $\gamma<0$, we have that 
\begin{equation}\label{ARRCLA:1bis}
\alpha=\gamma.
\end{equation}
\item If~$\alpha>0$, $\gamma\le0$, we have that
\begin{equation}\label{ARRCLA:2}\alpha^2-\gamma^2=1.
\end{equation}\end{itemize}
Moreover,
\begin{equation}\label{ARRCLA:4}
{\mbox{the case~$\alpha<0$, $\gamma=0$
cannot hold.}}
\end{equation}
\end{theorem}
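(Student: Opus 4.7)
The plan is to pass to the limit $\e\to 0^+$ in the Pohozaev-type identity~\eqref{DOMAIN} of Lemma~\ref{IDENDOM} and, by specializing the test field $\phi$, extract algebraic constraints on $(\alpha,\gamma)$. By hypothesis~\eqref{CONW22}, $D^2 u^\e$, $\nabla u^\e$ and $u^\e$ all converge strongly in $L^2_{\rm loc}$, so every term of~\eqref{DOMAIN} passes to the limit except for $\mathcal B_\e(u^\e)\in[0,1]$, which, along a further subsequence, admits a weak-$*$ $L^\infty_{\rm loc}$-limit $\widehat{\mathcal B}$. Via the Sobolev embedding $W^{2,2}\hookrightarrow C^0$ on compacts, $u^\e\to u$ uniformly on every compact subset of $\{u>0\}$ and of $\{u<0\}$, so $u^\e>\e$ (resp.\ $u^\e<0$) eventually there and $\mathcal B_\e(u^\e)\equiv 1$ (resp.\ $\equiv 0$). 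Hence $\widehat{\mathcal B}=\chi_{\{u>0\}}$ a.e.\ outside the interior of $\{u=0\}$, while on the interior of $\{u=0\}$ only $\widehat{\mathcal B}\in[0,1]$ is known a priori.

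Since $u$ depends on $x_1$ alone, testing the limit of~\eqref{DOMAIN} with $\phi=\eta\,e_j$ for $j\ge 2$ annihilates both ${\rm tr}(D^2u\,D\phi)$ and $\nabla u\cdot\Delta\phi$ (because $u_{x_ix_j}=0$ and $u_{x_j}=0$); the right-hand side then becomes $\int\partial_{x_j}\eta\,(|\Delta u|^2+\widehat{\mathcal B})\,dy=0$ for every $\eta\in C^\infty_c$, forcing $\partial_{x_j}\widehat{\mathcal B}=0$. Hence $\widehat{\mathcal B}=\widehat b(x_1)$ for some $\widehat b\in L^\infty(\R;[0,1])$. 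Next, test with $\phi=\eta_1(x_1)\,\zeta(x')\,e_1$ where $\zeta\in C^\infty_c(\R^{n-1})$ satisfies $\int\zeta\,dx'=1$. Using $u_{x_1x_1}=\alpha\chi_{\{x_1>0\}}+\gamma\chi_{\{x_1<0\}}$ and $u_{x_1}u_{x_1x_1}=\alpha^2 x_1$ on $\{x_1>0\}$, $\gamma^2 x_1$ on $\{x_1<0\}$, integrating by parts once in $x_1$ and discarding the vanishing term $\int\Delta_{x'}\zeta\,dx'=0$, the limit of~\eqref{DOMAIN} collapses to
\[
(\gamma^2-\alpha^2)\,\eta_1(0)=\int_\R \eta_1'(x_1)\,\widehat b(x_1)\,dx_1 \qquad \forall\,\eta_1\in C^\infty_c(\R).
\]
Distributionally on $\R$ this reads $\widehat b'=(\alpha^2-\gamma^2)\,\delta_0$, hence
\[
\widehat b(x_1)=c+(\alpha^2-\gamma^2)\,\chi_{\{x_1>0\}}\quad\text{a.e., for some constant }c\in\R.
\]

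Matching this formula with the phase information now yields the theorem. For $\alpha,\gamma>0$ one has $\widehat b\equiv 1$, hence $c=1$ and $\alpha^2=\gamma^2$; positivity gives $\alpha=\gamma$, which is~\eqref{ARRCLA:1}. The case $\alpha,\gamma<0$ is symmetric and gives~\eqref{ARRCLA:1bis}. For $\alpha>0$, $\gamma<0$ one has $\widehat b=\chi_{\{x_1>0\}}$, so $c=0$ and $\alpha^2-\gamma^2=1$, which is~\eqref{ARRCLA:2}; the one-phase endpoint $\gamma=0$ is then obtained by closure of the admissible set of $(\alpha,\gamma)$ as $\gamma\to 0^-$. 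Finally, for~\eqref{ARRCLA:4}: if $\alpha<0$ and $\gamma=0$ then $u<0$ on $\{x_1>0\}$, so $\widehat b(x_1)=0$ there; the formula above then forces $c+\alpha^2=0$, i.e.\ $c=-\alpha^2<0$, contradicting $\widehat b=c\in[0,1]$ on $\{x_1<0\}$.

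The main obstacle is the a priori indetermination of $\widehat{\mathcal B}$ on the interior of $\{u=0\}$: only the box bound $[0,1]$ is available there. This bound exactly suffices to exclude~\eqref{ARRCLA:4} and to close the non-degenerate case $\gamma<0$ of~\eqref{ARRCLA:2}, but the degenerate endpoint $\gamma=0$, $\alpha>0$ of~\eqref{ARRCLA:2} must be handled by the additional closure/continuity step just mentioned.
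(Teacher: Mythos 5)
Your overall route is the same as the paper's — pass to the limit in the Pohozaev identity of Lemma~\ref{IDENDOM} with $\phi$ aligned along $e_1$ and read off an algebraic relation on $(\alpha,\gamma)$ — but you package it more cleanly. You first test with $\phi=\eta\,e_j$, $j\ge 2$, to show that the weak-$*$ limit $\widehat{\mathcal B}$ depends on $x_1$ alone, and then reduce \eqref{DOMAIN} to the one-dimensional distributional identity $\widehat b'=(\alpha^2-\gamma^2)\delta_0$, whence $\widehat b=c+(\alpha^2-\gamma^2)\chi_{\{x_1>0\}}$. The paper instead substitutes the explicit $u$ into \eqref{DOMAIN} and simplifies after dominated convergence. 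The two computations are equivalent, but your $\widehat b$-characterization makes the role of the indeterminate region $\{u=0\}$ transparent: in particular it gives an honest proof of \eqref{ARRCLA:4} using only the box bound $\widehat{\mathcal B}\in[0,1]$ on $\{x_1<0\}$, a point at which the paper's justification (``since in all the cases under consideration $\{u=0\}$ has zero Lebesgue measure'') is in fact not applicable when $\gamma=0$.

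The genuine gap is at the endpoint $\alpha>0$, $\gamma=0$ of \eqref{ARRCLA:2}. The ``closure as $\gamma\to0^-$'' step is not a proof: the theorem concerns a \emph{fixed} sequence $u^\e$ converging to a \emph{fixed} profile $u$, so there is no one-parameter family of solutions in which to take a limit, and nothing in the hypotheses lets you approximate the one-phase profile by two-phase ones. By your own formula, for $\alpha>0$, $\gamma=0$ one has $\widehat b(x_1)=1-\alpha^2+\alpha^2\,\chi_{\{x_1>0\}}$, and on $\{x_1<0\}\subset\mathrm{int}\{u=0\}$ the only available information is $\widehat b\in[0,1]$, which yields $\alpha^2\le 1$ rather than $\alpha^2=1$. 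Some additional input is needed to pin down $\widehat{\mathcal B}$ on $\mathrm{int}\{u=0\}$ (for instance a non-degeneracy hypothesis of the type \eqref{Cg09}, which would force $\widehat{\mathcal B}=0$ a.e.\ there, hence $c=0$ and $\alpha^2=1$). Be aware that the paper's own proof has the same delicacy at this endpoint: the step that discards the $\{u=0\}$ contribution is valid only when that set is Lebesgue-null, which fails precisely when $\gamma=0$.

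A minor point: $W^{2,2}\hookrightarrow C^0_{\rm loc}$ only when $n\le 3$; for general $n$ you should pass to a further subsequence converging a.e., which is all that the dominated-convergence identification of $\widehat{\mathcal B}$ on $\{u\ne 0\}$ actually requires.
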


\medskip

We also observe that, in general, one cannot
expect uniform second derivative bounds
on solutions of~\eqref{eq-sing-pert}
without any additional structure (not even in low
dimension). For this,
we provide the following one-dimensional counterexample, where
the forcing term~$\beta_\epsilon$ satisfies~\eqref{lambda},
but does not fulfill the structural assumption in~\eqref{betaeps}:

\begin{theorem}\label{CONTR}
There exists~$\delta>0$ such that for all~$\e>0$ sufficiently small there
exist~$
\beta^\e\in C^\infty_0([0,\e],[0,+\infty))$,
such that
$$ \int_\R \beta_\e(t)\,dt=1,$$
and a solution~$u^\e$ of \eqref{eq-sing-pert}
in~$(-\delta,\delta)$, with
$$ \lim_{\e\to0^+} \|(u^\e)''\|_{L^\infty((-\delta,\delta))}=+\infty.$$
\end{theorem}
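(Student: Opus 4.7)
The key observation is that in one spatial dimension \eqref{eq-sing-pert} reduces to the fourth-order ODE $2(u^\e)''''=-\beta^\e(u^\e)$, and the statement asks only for the existence of \emph{some} solution on $(-\delta,\delta)$: no boundary values, boundedness, minimality, or convergence are imposed on $u^\e$. This leaves me free to prescribe arbitrarily large initial data for the second derivative at the origin, and the only remaining task is to verify that the Cauchy problem has a solution on a uniform interval and that $\beta^\e$ can be chosen admissibly and non-structurally.

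Concretely, I fix $\delta=1$ once and for all. For each small $\e>0$ I choose a divergent scale $M_\e\to+\infty$ (say $M_\e:=\e^{-1/2}$) and a bump $\beta^\e\in C^\infty_0([0,\e],[0,+\infty))$ with $\int_\R \beta^\e=1$. To make sure that $\beta^\e$ genuinely violates the structural form~\eqref{betaeps}, I take it much more concentrated than any rescaling of a fixed profile, for example a standard mollifier of half-width $h_\e\ll\e$ centered at $\e/2$, so that $\|\beta^\e\|_{L^\infty}\sim 1/h_\e$ grows strictly faster than $1/\e$ and no single $\beta$ satisfies $\beta^\e(\cdot)=\e^{-1}\beta(\cdot/\e)$.

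Next I view the ODE as the first-order autonomous system $Y'=F(Y)$ on $\R^4$ with $Y=(u^\e,(u^\e)',(u^\e)'',(u^\e)''')^\top$ and $F(Y)=\big(Y_2,\,Y_3,\,Y_4,\,-\tfrac12\beta^\e(Y_1)\big)^\top$, and solve the Cauchy problem with datum $Y(0)=(0,0,M_\e,0)$. Since $\beta^\e\in C^\infty_0$, the map $F$ is smooth, globally Lipschitz in $Y_1$ and merely linear in $(Y_2,Y_3,Y_4)$; the Picard-Lindelöf theorem combined with the standard linear-growth extension argument produces a unique $C^\infty$ solution $u^\e$ defined on all of $\R$, and hence a classical solution of \eqref{eq-sing-pert} on $(-\delta,\delta)$. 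By the initial datum $(u^\e)''(0)=M_\e$, we conclude
\[
\|(u^\e)''\|_{L^\infty((-\delta,\delta))}\ \ge\ \bigl|(u^\e)''(0)\bigr|\ =\ M_\e\ \longrightarrow\ +\infty\qquad\text{as }\e\to 0^+,
\]
which is the desired counterexample.

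\textbf{Main obstacle.} The proof is conceptual rather than technical: the surprising content is only that the usual interplay between $\|\beta^\e\|_{L^\infty}$ and the scale $\e$ encoded in~\eqref{betaeps} is the \emph{only} mechanism that keeps the second derivatives of solutions under control, so that once one is allowed to break the scaling the Cauchy problem admits solutions with arbitrarily large $(u^\e)''(0)$. The two minor points to check carefully are (i) that $F$ has at most linear growth, so that the solution extends to the whole interval $(-\delta,\delta)$, and (ii) that the concentrated mollifier choice above really lies in $C^\infty_0([0,\e],[0,+\infty))$, has unit mass, and does not arise from the rescaling in~\eqref{betaeps}.
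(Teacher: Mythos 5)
Your argument is logically self-consistent and verifies the theorem as literally stated: the ODE with smooth, compactly supported right-hand side admits global solutions for any Cauchy data, so prescribing $(u^\e)''(0)=M_\e\to\infty$ forces the blow-up. But this route sidesteps the point of the theorem, because it makes no use of the latitude that $\beta^\e$ may violate~\eqref{betaeps}: the same prescription of divergent Cauchy data yields unbounded~$(u^\e)''$ for the structural family~$\beta_\e=\e^{-1}\beta(\cdot/\e)$ too, so your ``Main obstacle'' claim -- that breaking the scaling is \emph{the} mechanism allowing blow-up -- is not correct. In the structural setting what controls~$(u^\e)''$ is the scaling combined with a constraint on the solution (boundedness, minimality, or vanishing on a half-line, as in the uniform $BMO$/$W^{2,p}$ bounds of~\eqref{star25}), and your family violates uniform boundedness outright, since~$u^\e(x)\approx \tfrac12 M_\e x^2$ near the origin. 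The paper's proof does carry the intended content: it starts from the explicit \emph{bounded} profile~$u^\e(x)=-x^2\log(\e+x^4)$ for~$x>0$, extended by zero on~$x\le 0$ so that~$u^\e$ and~$(u^\e)'$ vanish at the origin, then reverse-engineers~$\beta_\e$ by inverting~$u^\e$ near zero and setting~$\beta_\e(t):=-2(u^\e)''''\big((u^\e)^{-1}(t)\big)$, checking that this is nonnegative near~$t=0$ and extending smoothly to an element of~$C^\infty_0([0,\e],[0,+\infty))$ with unit mass; a direct computation at~$x=\sqrt[4]{\e}$ then exhibits a divergent~$\log\e$ term in~$(u^\e)''$. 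So the theorem is really about bounded solutions with a degenerate one-sided contact, and a worthwhile revision of your write-up would build in that uniform boundedness of~$u^\e$ explicitly -- at which point the cheap Cauchy-data route is unavailable and one is driven to a construction like the paper's logarithmic bifurcation.
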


We also point out the following example
of smooth solutions of equations like~\eqref{eq-sing-pert},
which are uniformly small but do not possess uniform first derivative bounds.
In this example,
the forcing term~$\beta_\e$ satisfies the scaling properties in~\eqref{betaeps},
but~$\beta$ does not satisfy the structural assumptions.

\begin{theorem}\label{EXAMPLE}
There exists~$\beta\in C^\infty(\R,[0,+\infty))$ such that~$\beta=0$ in~$(-\infty,0]$
for which the following statement holds true.

For every~$\e_0\in(0,1]$ there exist~$\e\in(0,\e_0]$ and~$u^\e\in C^\infty(\R)$ 
such that
\begin{eqnarray}
&& \label{90:A:001} 2(u^\e)''''=-\beta_\e(u) {\mbox{ in }}\R,\\
&& \label{90:A:001bis} u^\e=0 {\mbox{ in }}(-\infty,0],\\
&& \label{90:A:002} \sup_{{x\in\R}}|u^\e(x)|\le\e,\\
&& \label{90:A:003} \sup_{{\e\in(0,1]}}|(u^\e)'(1)|=+\infty.
\end{eqnarray}
Here above, $\beta_\e$ is as in \eqref{betaeps}.
\end{theorem}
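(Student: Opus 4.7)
The plan is to construct, in a single stroke, a $C^\infty$ nonlinearity $\beta$ and a family of smooth solutions $\{u^{\e_k}\}$ with $\e_k\searrow 0$, via a self-similar ansatz in the spirit of the counterexample in Theorem~\ref{CONTR}, but now with a single, $\e$-independent $\beta$. First I would set
\[
u^\e(x)\;:=\;\e\,\phi\!\left(\tfrac{x}{\sqrt\e}\right),
\]
for a profile $\phi\colon\R\to\R$ to be built. A chain-rule computation then makes \eqref{90:A:001} equivalent to the profile equation $2\phi''''=-\beta(\phi)$ on $\R$, makes \eqref{90:A:001bis} and \eqref{90:A:002} equivalent to $\phi\equiv 0$ on $(-\infty,0]$ and $\|\phi\|_\infty\le 1$, respectively, and yields $(u^\e)'(1)=\sqrt\e\,\phi'(1/\sqrt\e)$. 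Hence \eqref{90:A:003} is reduced to producing $\phi$ with $\phi'(y_k)/y_k\to+\infty$ along some sequence $y_k\to+\infty$: given any $\e_0$, one then picks $\e_k:=1/y_k^2\in(0,\e_0]$ for large $k$.

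The construction of the pair $(\phi,\beta)$ would exploit the removal of the normalizations in \eqref{lambda}: the support of $\beta$ is not required to lie in $[0,1]$, and $\beta$ need not have unit mass. I would take
\[
\phi \;=\; \sum_{k=1}^\infty \phi_k,
\]
where each $\phi_k$ is a $C^\infty$ bump centered at $y_k$, with pairwise disjoint (narrow) supports and pairwise disjoint ranges $I_k\subset(0,1)$. Each $\phi_k$ would be chosen as a scaled and translated copy of a fixed building block $\psi\in C^\infty_c((-1,1))$ designed so that $\psi''''\le 0$ on $\supp\psi$ (which forces $-\phi_k''''\ge 0$, and hence $\beta\ge 0$); taking $\psi$ to be constant on a sub-interval near its maximum ensures $\phi_k''''$ vanishes at the endpoints of $I_k$, so that $\beta$ extends smoothly by $0$ off $\bigcup_k I_k$. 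Because the $I_k$ are disjoint and each $\phi_k$ is monotone on each half of its support, the prescription $\beta(\phi(y)):=-2\,\phi''''(y)$ defines $\beta$ unambiguously on $\bigcup_k I_k$; extending by zero inside $(0,+\infty)\setminus\bigcup_k I_k$ and on $(-\infty,0]$ produces $\beta\in C^\infty(\R,[0,+\infty))$ vanishing on $(-\infty,0]$. The profile $\phi$ then automatically solves $2\phi''''=-\beta(\phi)$ on $\R$, is $C^\infty$, is bounded by $1$, and vanishes in a neighborhood of $(-\infty,0]$ (by placing the first bump far enough to the right of the origin).

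The main obstacle is the joint quantitative tuning required: for each $k$ one needs $|\phi_k'(y_k)|\ge k\,y_k$ (so that $|(u^{\e_k})'(1)|\ge k$), while simultaneously keeping $|\phi_k|\le 1$, $\phi_k''''\le 0$ on $\supp\phi_k$, the $I_k$'s disjoint and shrinking to $0$, and the resulting $\beta$ smooth and nonnegative. I would manage this by choosing the heights $h_k=\|\phi_k\|_\infty$, the widths $w_k$ of the bumps, and the locations $y_k$ inductively on $k$, using the freedom in $\beta$—in particular the lack of any a priori pointwise bound on $\beta$—to allocate to each bump the fourth-derivative budget needed to realize the target derivative at $y_k$ without destroying the smoothness and nonnegativity of the global $\beta$. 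The delicate step, which is the technical heart of the argument, is checking that the $\beta$ obtained by patching together the pointwise prescriptions on the $I_k$'s and the smooth zero-extensions in between is globally $C^\infty$; this would be accomplished by a standard mollification adapted to the disjointness structure of the $I_k$'s, combined with the plateau construction of each $\phi_k$.
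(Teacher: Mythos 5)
Your scaling ansatz $u^\e(x)=\e\,\phi(x/\sqrt\e)$, which reduces \eqref{90:A:001}--\eqref{90:A:003} to the profile equation $2\phi''''=-\beta(\phi)$ with $\|\phi\|_{L^\infty}\le 1$, $\phi\equiv 0$ on $(-\infty,0]$, and $\phi'(y_k)/y_k\to\infty$ along a sequence, is exactly the one the paper uses. The gap lies in the construction of $\beta$ from the profile. The building block you posit does not exist: if $\psi\in C^\infty_c((-1,1))$ satisfies $\psi''''\le 0$ on all of $\supp\psi$, then $\psi'''$ is non-increasing on $\R$ and vanishes outside a compact set, hence $\psi'''\equiv 0$, hence $\psi$ is a cubic on its support vanishing to infinite order at the endpoints, hence $\psi\equiv 0$. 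So the sign condition you invoke to force $\beta\ge 0$ annihilates every bump. (Indeed the same four-fold integration from $x=0$ shows that no nontrivial profile $\phi\in C^\infty(\R)$ with $\phi\equiv 0$ on $(-\infty,0]$ can have both $\phi''''\le 0$ and $\phi\ge 0$ on $\R$; the nonnegativity of $\beta$ is a genuinely delicate issue in this theorem, and the paper's own proof, which sets $\beta(t):=-2u''''(v(t))$, in fact never verifies it.)

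Second, and independently, the ``pairwise disjoint ranges'' structure cannot coexist with pairwise disjoint compact supports in $(0,\infty)$: each bump $\phi_k$, supported in some $(a_k,b_k)$ and vanishing at $a_k,b_k$, has range $[0,h_k]$ for some $h_k>0$, so all the ranges share a neighbourhood of $0$. Consequently $\beta(\phi(y)):=-2\phi''''(y)$ is not a definition but a system of compatibility constraints over all bumps whose ranges contain a given value $t$, and nothing in the proposal enforces it. This is precisely the difficulty the paper circumvents by making the profile \emph{globally monotone}: it takes $u(x):=\int_0^x\phi(t)\,dt$ for a fixed $\phi\in C^\infty(\R,[0,+\infty))$ that is strictly positive on $(0,\infty)$ with $\phi(k)=k^2$ and $\int_\R\phi=1$, so $u:(0,\infty)\to(0,1)$ is a bijection and $\beta:=-2u''''\circ u^{-1}$ is unambiguously defined; the blow-up of $(u^\e)'(1)$ then comes from $\e_k:=k^{-2}$ via $(u^{\e_k})'(1)=\sqrt{\e_k}\,\phi(1/\sqrt{\e_k})=k$. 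To salvage your bump decomposition you would have to make the bumps mutually consistent by realizing them as pieces of a single monotone master profile, which is exactly the paper's construction.
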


\medskip

The paper is organized as follows. In Section~\ref{sec:mot} we provide a
simple motivation for~\eqref{eq-sing-pert} based on
a game-theoretic model.  
Section~\ref{sec:sing} contains the proof of the convergence result
in Theorem~\ref{thm-eps-compactness}.

Section~\ref{integral-id} is devoted to the proof of the integral identity 
stated in Lemma~\ref{IDENDOM}. Suitable choices of the test function
in~\eqref{DOMAIN} provide the cornerstone 
to prove Theorem~\ref{lemma:F} in Section~\ref{PF:MO}.
Section~\ref{SHH:SSSA}
is devoted to the proof of Theorem~\ref{thm:strong-lap}.
Then, Theorem~\ref{ALFAGA} is proved in Section~\ref{sec:det}.

Section~\ref{sec:contr} contains the
counterexamples to the
uniform $C^{1,1}$ bounds stated in Theorems~\ref{CONTR}
and~\ref{EXAMPLE}.

The paper ends with an appendix which
provides some decay estimates for the gradient and the Hessian of
solutions of~\eqref{eq-sing-pert}.

\section{Motivations: a simple game-theoretic model for~\eqref{eq-sing-pert}}
\label{sec:mot}

We point out that there is a simple interpretation of~\eqref{eq-sing-pert} which comes from game theory and which can somehow favor the intuition of the problem.
Let us suppose to run a Gaussian stochastic process
in a Cartesian lattice (say, a random walk)
of small step scale~$h$.
The process starts at some point in a given domain~$\Omega$
and there is a prize~$u_0$ assigned
at the boundary. Let us also suppose that there is a penalization function~$v=v(x,t)$ which makes the player pay something
till it exits the domain~$\Omega$ (of course, the ``prize''
$u_0$ can also attain negative values, and the penalization~$v$
can also attain positive values, hence the game can also penalize exits and compensate for remaining in the domain).
More precisely, if the process exits the domain at
a point~$x\in\partial\Omega$, then the player obtains an award~$u_0(x)$; in addition, if the player exits at time~$T$
by following a trajectory~$x:[0,T)\to\Omega$, it has to pay
a fee quantified by
$$\int_0^T v(x(\theta),\theta)\,d\theta.$$

A natural question in this model is: assuming that the time step~$\tau$
in which the random walk 
takes place is quadratic with respect to the spacial scale, i.e.
\begin{equation}\label{TAUh}
\tau=h^2,\end{equation} and~$u=u(x,t)$ denotes the expected
value to win for a player situated at a point~$x\in (h\Z^n)\cap\Omega$ at time~$t\in\tau\N$,
how to describe~$u$ with a good approximation?

For this, we give a heuristic, but hopefully convincing
argument, not indulging in rigorous convergence details
(see e.g.~\cite{MR2584076} for related discussions).
First of all, one can consider that the 
expected winning
value for a player situated at point~$x$ at time~$t+\tau$
is equal to the
expected winning
values for a player at time~$t$ who is situated at points
reachable by the random walk in one iteration
(that is,~$x\pm he$, with~$e$ being an element
of the Euclidean basis~$\{e_1,\dots,e_n\}$),
weighted by the probability that such jumping occurred (that is~$1/2n$, since the process can go in each coordinate direction),
plus the running cost prescribed by the penalization~$v$,
that is, 
$$\int_t^{t+\tau} v(x(\theta),\theta)\,d\theta=
v(x,t)\,\tau+o(\tau),$$ assuming~$\tau$ small enough.
To write this concept in a formula, assuming also~$u$
sufficiently smooth,
we have that
\begin{eqnarray*}&&
u(x,t+\tau) \\&=&
\frac{1}{2n}\,\sum_{i=1}^n \Big( u(x+he_i,t)
+u(x-he_i,t)\Big)
-v(x,t)\,\tau+o(\tau)
\\ &=&
\frac{1}{2n}\,\sum_{i=1}^n 
\left( u(x,t)+h\nabla u(x,t)\cdot e_i+\frac{h^2}2
D^2u(x,t) e_i\cdot e_i+
u(x,t)-h\nabla u(x,t)\cdot e_i+\frac{h^2}2
D^2u(x,t) e_i\cdot e_i
\right)\\&&\qquad
-v(x,t)\,\tau+o(\tau)+o(h^2)\\
&=&
\frac{1}{2n}\,\sum_{i=1}^n 
\Big( 2u(x,t)+ h^2 D^2u(x,t) e_i\cdot e_i\Big)
-v(x,t)\,\tau+o(\tau)+o(h^2)
\\
&=& u(x,t)+
\frac{1}{2n}\,h^2\sum_{i=1}^n 
D^2u(x,t) e_i\cdot e_i
-v(x,t)\,\tau+o(\tau)+o(h^2)
.\end{eqnarray*}
Hence, recalling~\eqref{TAUh},
\begin{eqnarray*}
\frac{u(x,t+\tau)-u(x,t)}{\tau}&=&
\frac{1}{2n}\,\sum_{i=1}^n D^2u(x,t) e_i\cdot e_i
-v(x,t)+o(1)\\&
=&\frac{1}{2n}\,\sum_{i=1}^n\Delta u (x,t)
-v(x,t)+o(1)
,\end{eqnarray*}
that is, in the limit as~$\tau\to0^+$,
\begin{equation}\label{SLAps:1}
\left\{\begin{matrix}
\partial_t u (x,t)= \Delta u(x,t) - v(x,t) & {\mbox{ if $x\in\Omega$ and~$t>0$,}}\\
u(x,t)=u_0(x) &{\mbox{ if $x\in \partial \Omega$.}}
\end{matrix}\right.\end{equation}
Then, one can also consider the case in which the domain
penalization fee~$v$ is not deterministic but it also depends 
on a stochastic process. For instance, one can prescribe~$v$ to
vanish on the boundary of~$\Omega$ and to evolve
with a random walk in~$\Omega$, which
in addition receives an additional increment of size~$c$
if it travels in a region of the domain on which~$u$
changes its sign (like an ``interface prize'').
This would lead to an equation of the type
\begin{equation}\label{asadAA}
\partial_t v=\Delta v+ c{\mathcal{H}}^{n-1}\Big|_{\partial\{u>0\}},\end{equation}
where the latter can be seen as a $(n-1)$-dimensional measure
sitting on the interface. To avoid such a singular measure,
one can replace it with a mollified version induced by the function~$\beta_\e$ in~\eqref{betaeps},
since this function charges~$O(\e^{-1})$
the regions in which the values of~$u$ range in~$(0,\e)$.
In this way, and taking~$c:=1/2$ for simplicity,
one replaces the singular equation in~\eqref{asadAA} by a regularized version, thus obtaining
\begin{equation}\label{SLAps:2}
\left\{\begin{matrix}
\partial_t v(x,t)=\Delta v(x,t)+\displaystyle\frac{\beta_\e \big(u(x,t)\big)}2 &
{\mbox{ if $x\in\Omega$ and $t>0$,}}\\
v(x,t)=0 &{ \mbox{ if~$x\in\partial \Omega$.}}
\end{matrix}\right. \end{equation}
Of course, the stationary solutions of~\eqref{SLAps:1}
and~\eqref{SLAps:2} are of particular interest and they lead
to the system of equations
\begin{equation}\label{SLAps:3}
\left\{\begin{matrix}
\Delta u(x) = v(x) & {\mbox{ if $x\in\Omega$,}}\\
\Delta v(x)=- \displaystyle\frac{\beta_\e \big(u(x)\big)}2 &
{\mbox{ if $x\in\Omega$,}}\\
\\
u(x)=u_0(x) &{\mbox{ if $x\in \partial \Omega$,}}\\
v(x)=0 &{ \mbox{ if~$x\in\partial \Omega$.}}
\end{matrix}\right.\end{equation}
Substituting~$v$ inside the equations in~\eqref{SLAps:3},
one obtains for~$u=u^\e$
the equation in~\eqref{eq-sing-pert},
which is the main object of investigation of our paper,
with Navier boundary conditions.

\section{Convergence properties: proof of
Theorem~\ref{thm-eps-compactness}}\label{sec:sing}

In this section we will study the minimizers~$u^\e$ of the functional
in~\eqref{DOBB}.
Recalling~\eqref{betaeps} and~\eqref{bepsilon},
we also define
\begin{equation*}
\mathcal B(v):=\int_0^v\beta(t)\,dt,\end{equation*}
and we observe that
$$ {\mathcal{B}}_\e(v)={\mathcal{B}}\left(\frac{v}{\e}\right).$$
In particular, recalling~\eqref{lambda}, we have that,
for any~$x\in\Omega$,
\begin{equation}\label{BESP}
\mathcal B_\e(v(x))=
\begin{cases}
1 & \text{if } v(x)>\e,\\
\\
\displaystyle\int_0^{v(x)/\e}\beta(\tau)\,d\tau & \text{if } v(x)\le \e.
\end{cases}
\end{equation}
Hence 
\begin{equation}\label{bigbetabound}
0\le \mathcal B_\e (v)\le 1,
\end{equation} 
which says that the functions~$\mathcal B_\e$
are uniformly bounded in~$\epsilon$.
{F}rom this,
one can repeat the proof of Theorem~1.1 in~\cite{BIHA}
(see also~\cite{DK, selfdriven})
and obtain that
$$ {\mbox{$\Delta u^\e\in BMO_{\rm{\rm loc}}$, uniformly in $\e$.}}$$
In particular, we find that
\begin{equation}\label{star25}
{\mbox{$u^\e\in W^{2,p}_{\rm{\rm loc}}(\Omega)\cap C_{\rm{\rm loc}}^{1, \alpha}(\Om)$
for every~$\alpha\in(0,1)$ and~$p\in[1,+\infty)$, uniformly in~$\e$.}}\end{equation} 
Moreover, from~\eqref{eq-sing-pert}, it follows that ~$u^\e$
is locally~$C^\infty$ in~$\Om$, with bounds which in general depend on~$\e$.

We stress that estimates that are uniform in~$\e$, as the ones in~\eqref{star25},
are special, they depend on the structure of the problem taken into account,
and they cannot follow from standard elliptic regularity theory (see~\cite{GT}),
as pointed
out in Theorem~\ref{EXAMPLE}.
\medskip 

Now, we want to study the behaviour
of the minimizer~$u^\e$ as~$\e \to 0$. 
We start with the following preliminary convergence result:

\begin{lemma}\label{claim-eps}
For every $v\in \mathcal A$ we have that 
\begin{equation*}
\lim_{\e\to 0} J_\e[v]=J[v].
\end{equation*}
\end{lemma}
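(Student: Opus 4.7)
My plan is to observe that the biharmonic energy term $\int_\Omega|\Delta v|^2\,dx$ does not involve $\e$, so it cancels out of the difference $J_\e[v]-J[v]$. Hence the claim reduces to proving
\[
\lim_{\e\to 0}\int_\Omega \mathcal{B}_\e(v(x))\,dx=\int_\Omega \chi_{(0,+\infty)}(v(x))\,dx,
\]
for a fixed $v\in\mathcal{A}$. Since $v\in W^{2,2}(\Omega)$, it is well defined almost everywhere, which is enough for what follows.

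The next step is to establish pointwise a.e.\ convergence of the integrands, which is immediate from the explicit formula \eqref{BESP}. Fix $x\in\Omega$ at which $v(x)$ is defined. If $v(x)>0$, then for every $\e<v(x)$ we have $v(x)>\e$, so $\mathcal{B}_\e(v(x))=1=\chi_{(0,+\infty)}(v(x))$. If $v(x)\le 0$, then $v(x)/\e\le 0$ for all $\e>0$, and since $\beta$ is supported in $[0,1]$, the integral $\int_0^{v(x)/\e}\beta(\tau)\,d\tau$ vanishes; thus $\mathcal{B}_\e(v(x))=0=\chi_{(0,+\infty)}(v(x))$. In both cases $\mathcal{B}_\e(v(x))\to \chi_{(0,+\infty)}(v(x))$ as $\e\to 0^+$.

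Finally, I would invoke the dominated convergence theorem. The uniform bound \eqref{bigbetabound}, namely $0\le\mathcal{B}_\e(v)\le 1$, supplies the dominating function (the constant $1$), which is integrable on $\Omega$ because $\Omega$ is bounded. Consequently the limit can be passed under the integral sign, giving the desired identity and completing the proof.

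There is no substantial obstacle here: the lemma is essentially a bookkeeping step that isolates the pointwise limit of the singular forcing term and invokes the uniform $L^\infty$ control provided by \eqref{bigbetabound}. The only thing worth emphasizing in the write-up is that the explicit piecewise description \eqref{BESP} makes the pointwise convergence transparent without needing to quantify the exit set $\{0<v\le\e\}$, whose measure automatically tends to zero but whose contribution to the integral is already $O(1)\cdot|\{0<v\le\e\}|$ and hence harmless.
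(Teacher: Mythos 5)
Your proof is correct, and it is cleaner than the paper's. Both arguments begin the same way: the $\int_\Omega|\Delta v|^2$ term is $\e$-independent, so everything reduces to showing $\int_\Omega\mathcal B_\e(v)\to\int_\Omega\chi_{\{v>0\}}$. At that point the routes diverge. You establish pointwise a.e.\ convergence of $\mathcal B_\e(v(x))$ to $\chi_{(0,+\infty)}(v(x))$ directly from \eqref{BESP} — noting that for $v(x)>0$ one eventually has $v(x)>\e$, while for $v(x)\le 0$ the support condition on $\beta$ makes $\mathcal B_\e(v(x))=0$ for all $\e$ — and then invoke dominated convergence with the constant majorant $1$, which is integrable because $\Omega$ is bounded. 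The paper instead rewrites the difference as
\[
0\le J[v]-J_\e[v]=\int_{\{0<v<\e\}\cap\Omega}\bigl(\chi_{\{v>0\}}-\mathcal B_\e(v)\bigr)\le \bigl|\{0<v<\e\}\cap\Omega\bigr|,
\]
and then devotes the bulk of the proof to showing $|\{0<v<\e\}\cap\Omega|\to 0$, which it does through a quasicontinuity argument (exhausting $\Omega$ by a compact exceptional set of small $2$-capacity and a set where $v$ is continuous) combined with Fatou's lemma. Your version is shorter and sidesteps the capacity-theoretic machinery entirely: the measure of the transition set $\{0<v\le\e\}$ tending to zero is implicit in the dominated-convergence step (or, equivalently, in continuity from above of the finite measure on the nested sets $\{0<v<\e\}$), and you never need a fine representative of $v$ beyond the standard a.e.-defined one. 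The only thing the paper's approach might be said to buy is an explicit quantitative estimate $J[v]-J_\e[v]\le|\{0<v<\e\}\cap\Omega|$, which your DCT argument produces only implicitly, but for the purposes of this lemma that refinement is not used.
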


\begin{proof}
Recalling the definition of~$\mathcal B_\e$ in~\eqref{bepsilon}, we see that
\begin{equation}\begin{split}\label{deghregbvjds}
J[v]-J_\e[v]=\;&
\int_{\Om}\left({|\Delta v|^2}+ \I{v}\right)-\int_\Om\left(|\Delta v|^2
+\mathcal B_\e(v)\right)\\
=\;&
\int_\Om\Big(\I{v}-\mathcal B_\e(v)\Big)\\
=\;&
\int_{\{0<v<\e\}\cap \Om}\Big(\I{v}-\mathcal B_\e(v)\Big).
\end{split}\end{equation} 
Observe that  
\[
0\le \I{v}-\mathcal B_\e(v)\le 1.
\]
Using this observation together with~\eqref{deghregbvjds}, we conclude that
\[
0\le J[v]-J_\e[v]\le \big|\{0<v<\e\}\cap \Om\big|.
\]
Hence, to complete the proof, it remains to show
that
\begin{equation}\label{9ht97ufhbbc}
\big|\{0<v<\e\}\cap \Om\big|\to 0\quad {\mbox{ as }}\e\to0. 
\end{equation}
For this, let $v\in \mathcal A$. Then $v$ is 
quasicontinuous, i.e. for every~$\sigma>0$ small
there exists a compact set~$E_0$  such that~$v$
is continuous on~$\Om\setminus E_0$ and~$\text{cap}_2(E_0)<\sigma$
(see e.g.~\cite{MR1954868}).

Let $E:=\{x\in \Om\setminus E_0 : v(x)>0\}$.
Then~$E$ is bounded and open. Moreover, we have that
\begin{equation}\label{fjdgbvbj}
\big|\{0<v<\e\}\cap \Om\big| 
= \int_E\big(\chi_{\{v>0\}}-\chi_{\{v\ge \e\}}\big)+\int_{E_0}\chi_{\{0<v<\e\}}.
\end{equation}
Note also that
$$\int_E\chi_{\{v>0\}}\ge \int_E \chi_{\{v\ge \e\}}. $$ 
Thus, taking a sequence $\e_k{{\to}} 0$,
we get from Fatou Lemma
\[
\int_E\chi_{\{v>0\}}=\liminf_{\e_k\to 0}\int_E\chi_{\{v>0\}}\ge \liminf_{\e_k\to 0}\int_E\chi_{\{v\ge\e_k\}}
\ge \int_E \liminf_{\e_k\to 0}\chi_{\{v\ge\e_k\}}\ge \int_E \chi_{\{v>0\}}.
\]
Since $\int_E\chi_{\{v\ge\e\}}$ is non decreasing in $\e$ it follows that 
\[
\lim_{\e\to 0}  \int_E\big(\chi_{\{v>0\}}-\chi_{\{v\ge \e\}}\big)=0. 
\]
{F}rom this and~\eqref{fjdgbvbj} it follows that, for any~$\sigma>0$,
$$ \big|\{0<v<\e\}\cap \Om\big|\le C\sigma,$$
for some~$C>0$. Now the claim in~\eqref{9ht97ufhbbc}
follows if we let $\sigma\to 0$.
This completes the proof of Lemma~\ref{claim-eps}.
\end{proof}

With this, we can now prove the following ``convergence to minimizers'' result:

\begin{lemma}\label{teo:conv}
Suppose that, for any~$k\in\N$, 
\begin{equation}\label{ewiry854guer:PRE}
J_{\e_k}[u^{\e_k}]=\inf_{v\in \mathcal A}J_{\e_k}[v],
\end{equation}
and that~$u^{\e_k}\to u$ locally uniformly on the compact subsets
of~$\overline\Om$ as~$k\to+\infty$. 
Then
$$J[u]=\inf_{v\in \mathcal A}J(v).$$
\end{lemma}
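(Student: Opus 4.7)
The plan is to pass to the limit in the minimality inequality $J_{\e_k}[u^{\e_k}]\le J_{\e_k}[v]$, which by~\eqref{ewiry854guer:PRE} holds for every competitor $v\in\mathcal{A}$. Lemma~\ref{claim-eps} handles the right-hand side via $J_{\e_k}[v]\to J[v]$, so the substance of the argument is the lower-semicontinuity estimate
\begin{equation*}
J[u]\le \liminf_{k\to\infty} J_{\e_k}[u^{\e_k}].
\end{equation*}
Before addressing this, I would first check that $u\in\mathcal{A}$. Testing the minimality against $u_0\in\mathcal{A}$ and using the uniform bound~\eqref{bigbetabound} on $\mathcal{B}_{\e_k}$ produces a uniform bound on $\int_\Omega|\Delta u^{\e_k}|^2$; combined with $u^{\e_k}-u_0\in W^{1,2}_0(\Omega)$ and Poincar\'e's inequality, this gives a uniform bound of $u^{\e_k}$ in $W^{2,2}(\Omega)$. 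Extracting a (further) subsequence we may assume $u^{\e_k}\rightharpoonup u$ weakly in $W^{2,2}(\Omega)$, and since $u_0+W^{1,2}_0(\Omega)$ is weakly closed in $W^{1,2}(\Omega)$, the limit $u$ belongs to $\mathcal{A}$.

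For the biLaplacian term of $J_{\e_k}[u^{\e_k}]$, the weak convergence $\Delta u^{\e_k}\rightharpoonup \Delta u$ in $L^2(\Omega)$ yields, by lower-semicontinuity of the $L^2$-norm,
\begin{equation*}
\int_\Omega |\Delta u|^2\le \liminf_{k\to\infty}\int_\Omega |\Delta u^{\e_k}|^2.
\end{equation*}
For the potential term I would argue pointwise: at any $x$ with $u(x)>0$, the local uniform convergence $u^{\e_k}\to u$ combined with $\e_k\to0$ forces $u^{\e_k}(x)>\e_k$ for $k$ large, whence~\eqref{BESP} gives $\mathcal{B}_{\e_k}(u^{\e_k}(x))=1=\I{u}(x)$; and elsewhere $\mathcal{B}_{\e_k}(u^{\e_k})\ge 0=\I{u}$. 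Thus
\begin{equation*}
\liminf_{k\to\infty}\mathcal{B}_{\e_k}\bigl(u^{\e_k}(x)\bigr)\ge \I{u}(x)\qquad\text{for every }x\in\Omega,
\end{equation*}
and Fatou's lemma, applicable because of the uniform bound~\eqref{bigbetabound}, yields
\begin{equation*}
\int_\Omega \I{u}\le \liminf_{k\to\infty}\int_\Omega \mathcal{B}_{\e_k}(u^{\e_k}).
\end{equation*}

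Adding the two displays gives $J[u]\le\liminf_{k\to\infty} J_{\e_k}[u^{\e_k}]$, and combined with Lemma~\ref{claim-eps} applied to an arbitrary $v\in\mathcal{A}$ this produces $J[u]\le J[v]$, which is the assertion. The only mildly delicate point is the potential term: there is no pointwise convergence $\mathcal{B}_{\e_k}(u^{\e_k})\to\I{u}$ on the zero set $\{u=0\}$, since the value of $\mathcal{B}_{\e_k}(u^{\e_k})$ there depends on the rate at which $u^{\e_k}$ approaches $0$ relative to $\e_k$ and can sit anywhere in $[0,1]$; one must therefore be content with the Fatou-type inequality above rather than invoke dominated convergence.
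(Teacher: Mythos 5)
Your proof is correct and follows essentially the same route as the paper's: lower semicontinuity of the $L^2$ norm for the biLaplacian term, a Fatou-type argument for the potential term using the pointwise observation that $u(x)>0$ forces $\mathcal{B}_{\e_k}(u^{\e_k}(x))\to1$, and Lemma~\ref{claim-eps} to pass to the limit on the competitor side (the paper phrases the conclusion as a contradiction and inserts a weak-star limit $\ell$ of $\mathcal{B}_{\e_k}(u^{\e_k})$ as an intermediary, but these are cosmetic differences). One small inaccuracy: Fatou's lemma here is justified simply because $\mathcal{B}_{\e_k}(u^{\e_k})\ge0$, not because of the upper bound~\eqref{bigbetabound}; the upper bound plays no role in that step.
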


\begin{proof}
Suppose by contradiction that the claim fails. 
Then, there exists~$\widetilde u\in{\mathcal{A}}$ such that 
\begin{equation}\label{ewiry854guer:0}
J[u]-J[\widetilde u]=\delta>0.
\end{equation}
Also, by Lemma~\ref{claim-eps}, we have that $J_{\e_k}[u]\to J[u]$
and $J_{\e_k}[\widetilde u]\to J[\widetilde u]$, as~$\e_k\to0$. Hence, 
for sufficiently small $\e_k$, we have that 
\begin{equation}\label{ewiry854guer:1}
\big|J_{\e_k}[u]-J[u]\big|<\frac{\delta}4 \quad {\mbox{ and }}\quad 
\big|J_{\e_k}[\widetilde u]-J[\widetilde u]\big|<\frac{\delta}4,
\end{equation}
and also 
\[
J_{\e_k}[u]-J_{\e_k}[\widetilde u]>\frac{\delta}2.
\]
{F}rom the proof of Lemma~2.1 in~\cite{BIHA} (see in particular the formula
in display before~(2.5) in~\cite{BIHA}), one can see that~$
\|u^{\e_k}\|_{W^{2,2}(\Om)}\le C$ uniformly in~$\e$, for some~$C>0$.
Moreover, by~\eqref{bigbetabound}, the functions~${\mathcal{B}}_\e$ are
uniformly bounded in $L^\infty(\R)$. 
Therefore we can extract a subsequence, still denoted $u^{\e_k}$, so that 
\begin{itemize}
\item $u^{\e_k}\to u$ locally uniformly in $\Om$,
\item $u^{\e_k}\to u$ weakly in $W^{2, 2}(\Om)$,
\item $\mathcal B_{\e_k}(u^{\e_k})\to \ell$ weak-star in $L^\infty(\Om)$.
\end{itemize}
Note that if $u(x)>0$ at some~$x\in\Omega$,
then $u^{\e_k}(x)>0$ for sufficiently large $k$, possibly depending on~$x$. Hence 
$\ell(x)=1$ if $u(x)>0$, and so $\ell(x)\ge \I u(x)$.
Hence, from Fatou Lemma we have that 
\begin{equation}\label{ewiry854guer}
\liminf_{\e_k\to 0} \int_\Om \mathcal B_{\e_k}(u^{\e_k})\ge \int_\Om \ell(x)\ge \int_{\Om}
\I u.
\end{equation}
Moreover, by~\eqref{ewiry854guer:0} and~\eqref{ewiry854guer:1}, we have that
$$ J[u]-\delta =J[\widetilde u]\ge J_{\epsilon_k}[\widetilde u]-\frac\delta4
\ge J_{\epsilon_k}[u^{\epsilon_k}]-\frac\delta4,$$
where we also used the minimizing property in~\eqref{ewiry854guer:PRE}.

As a consequence,
using the lower semicontinuity of the $L^2$ norm of $\Delta u^{\e_k}$ and
recalling~\eqref{ewiry854guer},
\[ 
J[u]-\delta \ge \liminf_{k\to+\infty} J_{\e_k}[u^{\e_k}]-\frac\delta4\ge J[u] 
-\frac\delta4,
\]
which is a contradiction, and so the proof of Lemma~\ref{teo:conv}
is completed.
\end{proof}

The statement in Theorem~\ref{thm-eps-compactness} is now the
summary of the results obtained in this section, since it follows plainly from~\eqref{star25} and
Lemma~\ref{teo:conv}.

\section{An integral identity for solutions:
proof of Lemma~\ref{IDENDOM}}\label{integral-id}

We provide here the integral relation satisfied
by the
solutions of~\eqref{eq-sing-pert} stated in Lemma~\ref{IDENDOM}.

\begin{proof}[Proof of Lemma~\ref{IDENDOM}]
We write~$u:=u^\e$ for short and we use \eqref{bepsilon} 
to get that 
$$
\nabla\big(\mathcal B_\e(u(x))\big)=
\nabla
\int_0^{u(x)}\beta_\e(t)\,dt=
\beta_\e(u(x))\nabla u(x).
$$
Hence, by the Divergence Theorem,
\begin{equation}\label{OMAMGHABdie}
\begin{split}
\int_\Om \left(|\Delta u|^2+\mathcal B_\e(u)\right)\div \phi\;&=
\int_\Om \div \left(\Big(|\Delta u|^2+\mathcal B_\e(u)\Big) \phi\right)
-\int_\Om (2\Delta u\nabla\Delta u+\beta_\e (u)\nabla u)\cdot\nabla\phi\\
&=-\int_\Om (2\Delta u\nabla\Delta u+\beta_\e (u)\nabla u)\cdot\phi.
\end{split}\end{equation}
On the other hand, in light of~\eqref{eq-sing-pert},
\begin{eqnarray*}
&& -\int_\Om \beta_\e (u)\nabla u\cdot \phi=2
\int_\Om \Delta^2 u\,\nabla u\cdot \phi\\
&&\qquad=2\int_\Om \Delta u\,\Delta(\nabla u\cdot\phi)
=2\sum_{i=1}^n\int_\Om \Delta u\,\Delta\,(\partial_i u\,\phi^i)\\&&\qquad
=2\sum_{i=1}^n\int_\Om \Delta u\,(\partial_i \Delta u\,\phi^i+
\partial_i u\,\Delta\phi^i+2\partial_i \nabla u\cdot\nabla\phi^i)
\\&&\qquad
=2 \int_\Om \Delta u\,\big(\nabla\Delta u\cdot\phi+
\nabla u\cdot\Delta\phi+2 {\rm tr}(D^2 u D\phi)\big)
,
\end{eqnarray*}
which, combined with~\eqref{OMAMGHABdie}, leads to~\eqref{DOMAIN}
after a simplification.
\end{proof}

We observe that
another proof of~\eqref{DOMAIN}
can be performed by a domain perturbation,
looking at
$$u_\eta(x):=u(x+\eta\phi(x)),$$
which can be set into a ``vertical perturbation''
setting
$$\psi_\eta(x):=\frac{u(x+\eta\phi(x))-u_\eta(x)}{\eta},$$
finding that~$u_\eta=u+\eta\psi_\eta$ and thus computing the first
order perturbation in~$\eta$ of the energy functional in~\eqref{DOBB}
gives another proof of~\eqref{DOMAIN}.

We also point out that, as~$\e\to0^+$, formula~\eqref{DOMAIN}
also recovers formula~(4.4) in~\cite{BIHA}.

\section{Monotonicity formula: proof of Theorem~\ref{lemma:F}}\label{PF:MO}

This section is devoted to the proof of Theorem~\ref{lemma:F}.
As already mentioned, 
the strategy here is obtain suitable integral cancellations
by a series of careful integration by parts.
We start with some general computations valid in~$\R^n$.
In this part of the paper, for the sake of shortness, we suppose that
the assumptions of Theorem~\ref{lemma:F} are always satisfied
without further mentioning them. We write~$u:=u^\e$
for the sake of shortness and,
without loss of generality, we also suppose that~$B_2\Subset\Omega$.
Then, we have the following identity:

\begin{lemma}
For every~$r_1$, $r_2\in(0,3/2)$,
\begin{equation}\label{71qy81qush}\begin{split}
4\int_{r_1}^{r_2}R(r)\,dr
+2T(r_2)-2T(r_1)
+D(r_2)-D(r_1)=0,\end{split}\end{equation}
where
\begin{equation}\label{R1r}
\begin{split}
R(r)\,&:=\frac1{r^{n+1}}\sum_{m=1}^n
\int_{B_r} \Delta u\, \nabla u_{m}\cdot e_m
-\sum_{m=1}^n\int_{\partial B_{r}} \Delta u\nabla u_m\cdot
\frac{x^m\,x}{r^{n+2}}\\&=
\frac1{r^{n+1}}
\int_{B_r} |\Delta u|^2
-\frac{1}{r^n}\int_{\partial B_{r}} \Delta u\,\partial^2_r u
,\\
T(r)\,&:=
\sum_{m=1}^n\int_{\partial B_{r}} \Delta u\,u_m\,\frac{x^m}{r^{n+1}}\\
&=\frac1{r^n}\int_{\partial B_{r}} \Delta u\,\partial_ru
\\{\mbox{and }} \qquad D(r)\,&:=
\frac1{r^n}\int_{B_r} \big( |\Delta u|^2+\mathcal B_\e(u)\big),
\end{split}\end{equation}
and the notations~$x:=(x^1,\dots,x^n)$
and~$\partial_r:=\frac{x}{|x|}\cdot\nabla$ have been used.
\end{lemma}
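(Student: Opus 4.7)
The plan is to derive~\eqref{71qy81qush} from Lemma~\ref{IDENDOM} by testing identity~\eqref{DOMAIN} against a well-chosen radial vector field. The natural candidate is
$$\phi(x) = f(|x|)\,x,$$
where $f \in C^\infty_c\bigl((0,2)\bigr)$ is an arbitrary smooth radial profile (the assumption $B_2 \Subset \Omega$ makes this admissible). A direct computation using $\sum_{i,j} u_{ij}\,x^i x^j = |x|^2\,\partial_r^2 u$ and $\nabla u\cdot x = |x|\,\partial_r u$ gives
$$\operatorname{div}\phi = n\,f + |x|\,f',\qquad \operatorname{tr}\bigl(D^2 u\,D\phi\bigr) = f\,\Delta u + |x|\,f'\,\partial_r^2 u,\qquad \nabla u\cdot\Delta\phi = \bigl(|x|\,f'' + (n+1)\,f'\bigr)\,\partial_r u.$$
Passing to polar coordinates and setting $P(s)=\int_{\partial B_s}|\Delta u|^2$, $Q(s)=\int_{\partial B_s}\mathcal B_\e(u)$, $S(s)=\int_{\partial B_s}\Delta u\,\partial_r u$, $U(s)=\int_{\partial B_s}\Delta u\,\partial_r^2 u$, together with the primitives $A(s)=\int_0^s P$ and $B(s)=\int_0^s Q$, equation~\eqref{DOMAIN} reduces to a one-dimensional identity in~$s$ involving $f,f',f''$ tested against $P,Q,S,U$.

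The central step is to reorganize this radial identity by integration by parts so that only the combination $4R+2T'+D'$ survives. I would proceed in three movements. First, $4\int fP = -4\int f'A$ (since $P=A'$), and combining this with $4\int sf'U$ through the definitional identity $U(s) = A(s)/s - s^n R(s)$ reduces the ``$\Delta u$'' contribution to $-4\int_0^\infty s^{n+1}\,f'(s)\,R(s)\,ds$. Second, because $sf''+(n+1)f' = (sf')' + n f'$ and $S=s^n T$ forces $sS' = nS + s^{n+1}T'$, a single integration by parts converts the $S$-term into $-2\int_0^\infty s^{n+1}\,f'(s)\,T'(s)\,ds$. Third, on the right-hand side I use $(P+Q)=(A+B)'$, $A+B=s^n D$, and the elementary identity $(s^{n+1}f')' = (n+1)s^n f' + s^{n+1} f''$ to obtain, after two further integrations by parts, $\int_0^\infty s^{n+1}\,f'(s)\,D'(s)\,ds$. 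Combining the three reductions yields
$$\int_0^\infty s^{n+1}\,f'(s)\,\bigl[4R(s) + 2T'(s) + D'(s)\bigr]\,ds = 0 \qquad \text{for every } f \in C^\infty_c\bigl((0,2)\bigr).$$

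To close the argument, the fundamental lemma of the calculus of variations shows that $s^{n+1}\,(4R+2T'+D')$ is distributionally constant on $(0,2)$. Examining the limit $s\to 0^+$, using the interior smoothness of~$u^\e$ together with the uniform bound $\mathcal B_\e\le 1$ from~\eqref{bigbetabound}, each of $\int_{B_s}|\Delta u|^2$, $s\int_{\partial B_s}\Delta u\,\partial_r^2 u$, $\int_{B_s}\mathcal B_\e(u)$ and the analogous boundary integrals arising in~$T'$ and~$D'$ is $O(s^n)$, so the constant is seen to vanish. Therefore $4R+2T'+D'\equiv 0$ pointwise on $(0,2)$, and integrating this identity from~$r_1$ to~$r_2$ gives precisely~\eqref{71qy81qush}.

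The main obstacle will be the book-keeping of the integrations by parts, and in particular taming the ``high-order'' term $\int sf'U$ which carries the second radial derivative of~$u$: it cannot be simplified in isolation and fits into the scheme only after being re-expressed through $U = A/s - s^n R$, a rearrangement dictated by the very definition of~$R$. A secondary subtlety is ruling out a nontrivial constant in $s^{n+1}(4R+2T'+D')$; this is cheap for the smooth solution~$u^\e$ at fixed~$\e$, but would become the delicate point if one wanted to propagate the identity to the weak-solution framework that feeds into Theorem~\ref{thm:strong-lap}.
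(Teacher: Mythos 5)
Your proposal is correct, and it rests on the same essential mechanism as the paper: test the identity~\eqref{DOMAIN} against the radial vector field $\phi(x)=f(|x|)\,x$. I verified the three pointwise formulas for $\operatorname{div}\phi$, $\operatorname{tr}(D^2u\,D\phi)$ and $\nabla u\cdot\Delta\phi$, the identity $U=A/s-s^nR$, the relation $sS'=nS+s^{n+1}T'$, and the two integrations by parts on the right-hand side leading to $\int s^{n+1}f'D'$; all the bookkeeping checks out and one indeed lands on $\int_0^\infty s^{n+1}f'(s)\bigl[4R+2T'+D'\bigr]\,ds=0$ for all $f\in C_c^\infty((0,2))$.

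Where you diverge from the paper is in how $f$ is eliminated. The paper chooses the specific family $f=\eta_\delta$, a Lipschitz ramp approximating $\chi_{[0,r]}$, and sends $\delta\to 0^+$; the limit directly produces the pointwise relation $4R(r)+2T'(r)+D'(r)=0$ at each fixed $r$, with no residual constant to worry about. You instead keep $f$ arbitrary, invoke the fundamental lemma of the calculus of variations (applied to $f'$, whose range is the mean-zero elements of $C_c^\infty$), and conclude only that $s^{n+1}\bigl(4R+2T'+D'\bigr)$ is a constant; you then need the extra step of evaluating the $s\to 0^+$ asymptotics — using the interior smoothness of $u^\e$ and the uniform bound $\mathcal B_\e\le 1$ — to show that constant vanishes. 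Both routes are rigorous. Your version trades the explicit shell-limit computation for a cleaner system of one-dimensional integrations by parts, at the small cost of the additional asymptotic argument near the origin; the paper's version avoids any discussion of behaviour at $s=0$ but requires tracking $O(\delta)$ errors when passing the cutoff to its limit. Your final caveat — that the constant-elimination would be the delicate point in a weak-solution setting — is a fair observation, though for the smooth $u^\e$ considered here it is indeed cheap.
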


\begin{proof} Fix~$r\in(0,3/2)$.
We let~$\delta>0$ (to be taken as small as we wish in what follows),
and consider a smooth function~$\eta=\eta_\delta$ supported in~$B_{r+\delta}$.
We also define~$\phi=(\phi^1,\dots,\phi^n)
:\R^n\to\R^n$ as
\begin{eqnarray*} \R^n\ni x=(x^1,\dots,x^n)\longmapsto\phi^m(x):=x^m\eta(x).\end{eqnarray*}
We observe that~$\phi^m$ is supported in~$B_1$,
as long as $\delta$ is sufficiently small. Consequently,
for any~$m\in\{1,\dots,n\}$,
\begin{equation*}
\int_\Omega \Delta u \,u_m\,\Delta\phi^m=
\int_{\R^n} \Delta u \,u_m\,\Delta\phi^m=-
\int_{\R^n} \nabla(\Delta u \,u_m)\cdot\nabla\phi^m=-\int_{\Omega} \nabla(\Delta u \,u_m)\cdot\nabla\phi^m
,
\end{equation*}
or, in compact notation,
$$ \int_\Omega \Delta u \,\nabla u\cdot\Delta\phi
=-\sum_{m=1}^n \int_{\Omega} \nabla (\Delta u \,u_m)\cdot\nabla \phi^m.$$
{F}rom this and~\eqref{DOMAIN},
we find that
\begin{equation}\label{RAT1w}\begin{split}
0\;=\;& 2\int_\Omega  \Big(2\,{\rm tr} \big(D^2u \,D\phi\big)
+\nabla u \cdot\Delta\phi
\Big)\,\Delta u
-\int_\Omega {\rm div}\phi \,\Big( |\Delta u|^2+ \mathcal B_\e(u)\Big)
\\ =\;&
\int_\Omega  \left(2\sum_{m=1}^n\Big(2\,\Delta u\nabla u_m
-\nabla (\Delta u \,u_m)\Big)\cdot\nabla \phi^m
- {\rm div}\phi \,\Big( |\Delta u|^2+ \mathcal B_\e(u)\Big)\right).
\end{split}\end{equation}
Now, we take~$\eta\in C^\infty_0(B_{r+\delta})$ such as
$$ \eta(x):=\begin{cases} 1 & {\mbox{ if }}x\in B_r,\\
\displaystyle\frac{\delta+r-|x|}{\delta}& {\mbox{ if }}x\in B_{r+\delta-\delta^2}\setminus B_{r+\delta^2},
\end{cases}$$
and~$|\nabla\eta|\le 2/\delta$. 
In this way, we have that
\begin{eqnarray*}&& \nabla \eta(x)=
-\frac{x}{\delta\,|x|}\qquad{\mbox{ for all }}x\in B_{r+\delta-\delta^2}\setminus B_{r+\delta^2}\\
{\mbox{and }}&&\nabla\phi^m(x)=e_m\eta(x)-\frac{x^m\,x}{\delta\,|x|}
\qquad{\mbox{ for all }}x\in B_{r+\delta-\delta^2}\setminus B_{r+\delta^2},
\end{eqnarray*}
which also gives that
$$ {\rm div}\phi(x)=
n\eta(x)-\frac{|x|}{\delta}
\qquad{\mbox{ for all }}x\in B_{r+\delta-\delta^2}\setminus B_{r+\delta^2}.$$
Moreover, we see that~$\nabla\phi^m=e_m$ in~$B_r$
and~$|\nabla\phi^m|\le 4/\delta$ in~$(
B_{r+\delta}\setminus
B_{r+\delta-\delta^2})\cup( B_{r+\delta^2}\setminus B_r)$.
As a consequence, we obtain that
\begin{eqnarray*}&&
\int_\Om \Big(2\,\Delta u\nabla u_m
-\nabla (\Delta u \,u_m)\Big)\cdot\nabla \phi^m\\
&=&\int_{B_r}\Big(2\,\Delta u\nabla u_m
-\nabla (\Delta u \,u_m)\Big)\cdot e_m\\&&\quad+
\int_{ B_{r+\delta-\delta^2}\setminus B_{r+\delta^2} } 
\Big(2\,\Delta u\nabla u_m
-\nabla (\Delta u \,u_m)\Big)\cdot\left(e_m\eta(x)-\frac{x^m\,x}{\delta\,|x|}\right)
+O(\delta)\\
&=&\int_{B_r} \Big(2\,\Delta u\nabla u_m
-\nabla (\Delta u \,u_m)\Big)\cdot e_m-
\int_{ \partial B_{r} } 
\Big(2\,\Delta u\nabla u_m
-\nabla (\Delta u \,u_m)\Big)\cdot\frac{x^m\,x}{r}
+O(\delta)
\end{eqnarray*}
and 
\begin{eqnarray*}
&&\int_\Om
{\rm div}\phi \,\Big( |\Delta u|^2+ \mathcal B_\e(u)\Big)\\
&=&n\int_{B_r}
\Big( |\Delta u|^2+ \mathcal B_\e(u)\Big)+
\int_{ B_{r+\delta-\delta^2}\setminus B_{r+\delta^2} } 
\Big( |\Delta u|^2+ \mathcal B_\e(u)\Big)\left(
n\eta(x)-\frac{|x|}{\delta}\right)+O(\delta)\\
&=& n\int_{B_r}
\Big( |\Delta u|^2+ \mathcal B_\e(u)\Big)-r
\int_{ \partial B_r } 
\Big( |\Delta u|^2+ \mathcal B_\e(u)\Big)+O(\delta).
\end{eqnarray*}
We insert these two pieces of information into~\eqref{RAT1w},
and we send~$\delta\to0^+$. In this way, we see that
\begin{equation}\label{rnao}
\begin{split}
0\;&=\;
2\sum_{m=1}^n\left[\int_{B_r} \Big(2\,\Delta u\nabla u_m
-\nabla (\Delta u \,u_m)\Big)\cdot e_m-
\int_{ \partial B_{r} } 
\Big(2\,\Delta u\nabla u_m
-\nabla (\Delta u \,u_m)\Big)\cdot\frac{x^m\,x}{r}\right]\\&\qquad-
n\int_{B_r}
\Big( |\Delta u|^2+ \mathcal B_\e(u)\Big)+r
\int_{ \partial B_r } 
\Big( |\Delta u|^2+ \mathcal B_\e(u)\Big)\end{split}\end{equation}
Now, recalling~\eqref{R1r}, we see that
$$ D'(r)=
\frac1{r^n}\int_{\partial B_r} \big( |\Delta u|^2+\mathcal B_\e(u)\big)-
\frac{n}{r^{n+1}}\int_{B_r} \big( |\Delta u|^2+\mathcal B_\e(u)\big),
$$
and hence we can write~\eqref{rnao}
as
\begin{equation}\label{rnao2}
\begin{split}
0=\frac{2}{r^{n+1}}\sum_{m=1}^n\left[\int_{B_r} \Big(2\,\Delta u\nabla u_m
-\nabla (\Delta u \,u_m)\Big)\cdot e_m-
\int_{ \partial B_{r} } 
\Big(2\,\Delta u\nabla u_m
-\nabla (\Delta u \,u_m)\Big)\cdot\frac{x^m\,x}{r}\right]+
D'(r).\end{split}\end{equation}
We also point out that
\begin{eqnarray*}&&
\int_{B_r} \nabla (\Delta u \,u_m)\cdot e_m=
\int_{B_r} {\rm div} (\Delta u \,u_m\, e_m)=
\int_{\partial B_r} \Delta u \,u_m\, \frac{x^m}{r},
\end{eqnarray*}
and, changing variable,
\begin{eqnarray*}&&
\int_{ \partial B_{r} } \nabla \big(\Delta u(x) \,u_m(x)\big)\cdot\frac{x^m\,x}{r^{n+2}}\,
d{\mathcal{H}}^{n-1}(x)=
\int_{ \partial B_1 } \nabla \big(
\Delta u (ry)\,u_m(ry)\big)\cdot\frac{y^m\,y}{r}\,
d{\mathcal{H}}^{n-1}(y)\\&&\qquad=
\int_{ \partial B_1 } \frac{d}{dr} \big(
\Delta u (ry)\,u_m(ry)\big)\frac{y^m}{r}\,
d{\mathcal{H}}^{n-1}(y)\\&&\qquad=
\frac{d}{dr} 
\int_{ \partial B_1 } \big(
\Delta u (ry)\,u_m(ry)\big)\frac{y^m}{r}\,
d{\mathcal{H}}^{n-1}(y)
+
\int_{ \partial B_1 } \big(
\Delta u (ry)\,u_m(ry)\big)\frac{y^m}{r^2}\,
d{\mathcal{H}}^{n-1}(y)
\\&&\qquad=
\frac{d}{dr} 
\int_{ \partial B_r } \big(
\Delta u (x)\,u_m(x)\big)\frac{x^m}{r^{n+1}}\,
d{\mathcal{H}}^{n-1}(x)
+
\int_{ \partial B_r } \big(
\Delta u (x)\,u_m(x)\big)\frac{x^m}{r^{n+2}}\,
d{\mathcal{H}}^{n-1}(x).
\end{eqnarray*}
These observations and~\eqref{R1r} give that
$$ \sum_{m=1}^n\left[-\frac1{r^{n+1}}\int_{B_r} \nabla (\Delta u \,u_m)\cdot e_m+
\int_{ \partial B_{r} } \nabla (\Delta u \,u_m)\cdot\frac{x^m\,x}{r^{n+2}}\right]=\sum_{m=1}^n
\frac{d}{dr} 
\int_{ \partial B_r } \big(
\Delta u \,u_m\big)\frac{x^m}{r^{n+1}}=T'(r).$$
Thus, inserting this information into~\eqref{rnao2}, we find that
\begin{equation}\label{rnao3}
\begin{split}
0=\frac{4}{r^{n+1}}\sum_{m=1}^n\left[\int_{B_r}\Delta u\nabla u_m\cdot e_m-
\int_{ \partial B_{r} } 
\Delta u\nabla u_m\cdot\frac{x^m\,x}{r}\right]+2T'(r)+
D'(r).\end{split}\end{equation}
{F}rom this and~\eqref{R1r}, we can write
$$ 0=4R(r)+2T'(r)+
D'(r),$$
which, after an integration, gives~\eqref{71qy81qush}, as desired.
\end{proof}

The proof of Theorem~\ref{lemma:F} will also rely on the following
auxiliary result:

\begin{lemma}
In the notation stated by~\eqref{R1r},
we have that, for any~$r_1$, $r_2\in(0,3/2)$,
\begin{equation}\label{DV-1}
\begin{split}&
2T(r_1)-2T(r_2)
+D(r_1)-D(r_2)\\
=\;&
4\int_{r_1}^{r_2}\left(
\frac1{r^{n}}\int_{\partial B_r}\Delta u\,\left(2\frac{u_r}{r}-\partial^2_r u
-2\frac{u}{r^2}\right)
-\frac1{r^{n+1}}\int_{B_r} \Delta^2 u\,u\right)\,dr-4V(r_2)+4V(r_1),\end{split}\end{equation}
where
\begin{equation}\label{DV-2}
V(r):= \frac1{r^{n+1}}\int_{\partial B_r}\Delta u u.
\end{equation}
\end{lemma}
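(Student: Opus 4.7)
The starting point is the identity \eqref{71qy81qush} already at hand, which gives $2T(r_1)-2T(r_2)+D(r_1)-D(r_2)=4\int_{r_1}^{r_2}R(r)\,dr$. The task therefore reduces to rewriting $R(r)$, after integration in $r$, in the form appearing on the right-hand side of \eqref{DV-1}. The plan is to expose inside $R(r)$ a term that is exactly $-V'(r)$, so that integration in $r$ produces the $-4V(r_2)+4V(r_1)$ boundary pieces, while re-expressing what remains as the compact combination $\frac{1}{r^n}\int_{\partial B_r}\Delta u(2u_r/r-\partial_r^2u-2u/r^2)$ together with a bulk biharmonic contribution $\int_{B_r}\Delta^2u\cdot u$.

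The key analytic step is Green's second identity applied to $u$ and $\Delta u$ on $B_r$, namely
\[
\int_{B_r}|\Delta u|^2 = \int_{B_r} u\,\Delta^2 u + \int_{\partial B_r}\Delta u\,u_r - \int_{\partial B_r}u\,\partial_r\Delta u,
\]
which, inserted into the bulk term of $R(r)$, exposes the biharmonic contribution but also introduces the ``high-order'' boundary integral $\int_{\partial B_r}u\,\partial_r\Delta u$. To dispose of this third-derivative term I would differentiate $V(r)$ by rescaling to the unit sphere, obtaining
\[
V'(r) = -\frac{2}{r^{n+2}}\int_{\partial B_r}\Delta u\,u + \frac{1}{r^{n+1}}\int_{\partial B_r}\bigl(u\,\partial_r\Delta u + \Delta u\,u_r\bigr),
\]
and then solve this for $\frac{1}{r^{n+1}}\int_{\partial B_r}u\,\partial_r\Delta u$. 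Substituting the resulting expression back into $R(r)$ replaces the nuisance third-derivative boundary term by $V'(r)$, together with corrections that are of the tame type $\int_{\partial B_r}\Delta u\,u_r$ and $\int_{\partial B_r}\Delta u\,u$.

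Collecting the resulting pieces, the two $\int_{\partial B_r}\Delta u\,u_r$ summands add up to give the coefficient $2/r^{n+1}$ producing the $2u_r/r$ contribution in \eqref{DV-1}; the $\int_{\partial B_r}\Delta u\,u$ contributions combine to yield the $-2u/r^2$ piece; the original $-\frac{1}{r^n}\int_{\partial B_r}\Delta u\,\partial_r^2u$ term of $R(r)$ survives verbatim; and $\int_{B_r}\Delta^2 u\,u$ is left over as the bulk remainder. Integrating from $r_1$ to $r_2$, keeping the prefactor $4$ inherited from \eqref{71qy81qush}, and noting that the $V'(r)$ piece integrates to $V(r_2)-V(r_1)$, delivers \eqref{DV-1}. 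The main obstacle is not conceptual but purely combinatorial: the Green identity and the scaling derivative of $V$ contribute several terms with the same structure but different coefficients and powers of $r$, and the target combination $2u_r/r-\partial_r^2u-2u/r^2$ only materialises after all these pieces balance exactly, so a sanity check on an explicit radial profile such as $u(x)=|x|^4$ is worthwhile to confirm the signs.
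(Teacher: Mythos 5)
Your proposal follows essentially the same route as the paper's own proof: apply Green's second identity on $B_r$ to rewrite $\int_{B_r}|\Delta u|^2$ as boundary terms plus a biharmonic bulk term, then differentiate $V(r)$ by rescaling to the unit sphere and solve for $\frac{1}{r^{n+1}}\int_{\partial B_r}u\,\partial_r\Delta u$ so that the third-order boundary term cancels. This is exactly the content of~\eqref{9:9:ia1ap}, the subsequent $V'(r)$ computation, and~\eqref{PamduST}; your suggested sanity check on an explicit profile is worth doing, since if you carry out the bookkeeping carefully the bulk term appears with sign $+\frac{1}{r^{n+1}}\int_{B_r}\Delta^2 u\,u$ (as in~\eqref{PamduST}), whereas the lemma statement records a minus sign, which points to a sign discrepancy in the paper between~\eqref{PamduST} and the displayed line that follows it.
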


\begin{proof} We observe that
\begin{equation}\label{9:9:ia1ap}
\begin{split}
& \int_{B_r} |\Delta u|^2 = \int_{B_r}\Big( {\rm div}(\Delta u\nabla u)-
\nabla\Delta u\cdot\nabla u\Big)
=\int_{\partial B_r}\Delta u\,u_r-\int_{B_r}\nabla\Delta u\cdot\nabla u\\
&\qquad=\int_{\partial B_r}\Delta u\,u_r-\int_{B_r}{\rm div}(u\nabla\Delta u)
+\int_{B_r} \Delta^2u\,u\\
&\qquad=\int_{\partial B_r}\Delta u\,u_r-\int_{\partial B_r} u\,\Delta u_r
+\int_{B_r} \Delta^2u\,u.
\end{split}\end{equation}
Furthermore, we see that
\begin{eqnarray*}&&
\frac{d}{dr} \left(\frac1{r^{n+1}}\int_{\partial B_r}\Delta u u\right)\\&=&
\frac{d}{dr} \left(\frac1{r^{2}}\int_{\partial B_1}\Delta u(r\theta) u(r\theta)\right)\\&=&
-\frac2{r^{3}}\int_{\partial B_1}\Delta u(r\theta) u(r\theta)
+
\frac1{r^{2}}\int_{\partial B_1}\Delta u_r(r\theta) u(r\theta)
+
\frac1{r^{2}}\int_{\partial B_1}\Delta u(r\theta) u_r(r\theta)\\&=&
-\frac2{r^{n+2}}\int_{\partial B_r}\Delta u\,u
+
\frac1{r^{n+1}}\int_{\partial B_r}\Delta u_r\, u
+
\frac1{r^{n+1}}\int_{\partial B_r}\Delta u\,u_r.
\end{eqnarray*}
This and~\eqref{9:9:ia1ap} give that
\begin{eqnarray*}
&& \frac1{r^{n+1}}\int_{B_r} |\Delta u|^2
-\frac1{r^n}\int_{\partial B_{r}} \Delta u\,\partial^2_r u\\
&=&
\frac1{r^{n+1}}\int_{\partial B_r}\Delta u\,u_r-\frac1{r^{n+1}}\int_{\partial B_r} u\,\Delta u_r
+\frac1{r^{n+1}}\int_{B_r} \Delta^2u\,u-\frac1{r^n}\int_{\partial B_{r}} \Delta u\,\partial^2_r u\\
&=&\frac1{r^{n}}\int_{\partial B_r}\Delta u\,\left(2\frac{u_r}{r}-\partial^2_r u
-2\frac{u}{r^2}\right)
+\frac1{r^{n+1}}\int_{B_r} \Delta^2u\,u
-\frac{d}{dr} \left(\frac1{r^{n+1}}\int_{\partial B_r}\Delta u u\right).
\end{eqnarray*}
Now we
integrate the identity above and recall~\eqref{DV-2},
to conclude that
\begin{equation}\label{PamduST}
\begin{split}&
\int_{r_1}^{r_2}\left( \frac1{r^{n+1}}\int_{B_r} |\Delta u|^2
-\frac1{r^n}\int_{\partial B_{r}} \Delta u\,\partial^2_r u
\right)\,dr
\\ =\;&
\int_{r_1}^{r_2}\left(
\frac1{r^{n}}\int_{\partial B_r}\Delta u\,\left(2\frac{u_r}{r}-\partial^2_r u
-2\frac{u}{r^2}\right)
+\frac1{r^{n+1}}\int_{B_r} \Delta^2u\,u\right)\,dr-V(r_2)+V(r_1)
.\end{split}\end{equation}
Hence, recalling~\eqref{R1r}, we can write~\eqref{PamduST} as
$$ \int_{r_1}^{r_2}R(r)\,dr=
\int_{r_1}^{r_2}\left(
\frac1{r^{n}}\int_{\partial B_r}\Delta u\,\left(2\frac{u_r}{r}-\partial^2_r u
-2\frac{u}{r^2}\right)
-\frac1{r^{n+1}}\int_{B_r} \Delta^2u\,u\right)\,dr-V(r_2)+V(r_1).$$
{F}rom this and~\eqref{71qy81qush} we obtain the desired claim in~\eqref{DV-1}.
\end{proof}

The previous calculations were valid in any dimension~$n$,
and we now restrict to the case $n=2$.

\begin{proof}[Proof of \eqref{MONOFORMULA}]
Using using polar coordinates~$(r,\theta)$, we compute 
\begin{equation}\label{ijNAYNTE}
\begin{split}
-\frac1{r^n}\int_{\partial B_r}\Delta u\,\Big(2\frac{u_r}{r}
- \partial^2_r u- 2\frac{u}{r^2}\Big)
=
& \int_{\partial B_1}\frac1{r}\Delta u\,\Big(
u_{rr}-2\frac{u_r}{r}+2\frac{u}{r^2}\Big)
\\=\;&\int_{\partial B_1}\frac1{r}
\Big( u_{rr}+\frac{u_r}r+\frac{u_{\theta\theta}}{r^2} \Big)\Big(u_{rr}-2\frac{u_r}{r}+2\frac{u}{r^2}\Big)
\\=\;&
A(r)+B(r),
\end{split}\end{equation}
where 
\begin{equation}\label{7wqtfychv78rtef7465y45ihgbksajgdf}
\begin{split}&
A(r):= \int_{\p B_1}\frac1{r^3}u_{\theta\theta}\Big(
u_{rr}-2\frac{u_r}{r}+2\frac{u}{r^2}\Big)\\ {\mbox{and }}\quad&
B(r):=\int_{\p B_1}\frac1{r}\Big( u_{rr}+\frac{u_r}r\Big)\Big(u_{rr}-2\frac{u_r}{r}+2\frac{u}{r^2}\Big)
.\end{split}
\end{equation}
Now we deal with the terms~$A(r)$ and~$B(r)$
separately. To start with, we perform several
integrations by parts that involve the terms related to~$A(r)$. 
We see that
\begin{equation}\label{COMA-MA1}
\begin{split}
\frac1{r^3}\int_{\p B_1}u_{\theta\theta}u_{rr}=
&-
\frac1{r^3}\int_{\p B_1} u_\theta u_{\theta rr}\\
=&
-\frac d{dr}\int_{\p B_1}\frac{u_\theta u_{r\theta}}{r^3}+\int_{\p B_1}\frac{u_{r\theta}^2}{r^3}-3
\int_{\p B_1}\frac{u_\theta u_{\theta r}}{r^4}.
\end{split}
\end{equation}
Similarly, we have that
\begin{equation}\label{COMA-MA2}
-2\int_{\p B_1}\frac1{r^4} u_{\theta\theta} u_r=
2\int_{\p B_1}\frac{u_\theta u_{\theta r}}{r^4}= 2\int_{\p B_1}\frac{u_\theta u_{\theta r}}{r^4}
\end{equation}
and 
\begin{equation}\label{COMA-MA3}
2\int_{\p B_1}\frac1{r^5} u_{\theta\theta} u=-2\int_{\p B_1}\frac{u_\theta^2}{r^5}.
\end{equation}
Combining~\eqref{7wqtfychv78rtef7465y45ihgbksajgdf}
\eqref{COMA-MA1}, \eqref{COMA-MA2} and~\eqref{COMA-MA3},
we find that
\begin{equation}\label{BIR0}
\begin{split}
A(r)=
&
-\frac d{dr}\left(
\int_{\p B_1}\frac{u_\theta u_{r\theta}}{r^3}\right)+\int_{\p B_1}\frac{u_{r\theta}^2}{r^3}-3
\int_{\p B_1}\frac{u_\theta u_{\theta r}}{r^4}
+2\int_{\p B_1}\frac{u_\theta u_{\theta r}}{r^4}
-2\int_{\p B_1}\frac{u_\theta^2}{r^5}\\
=&
-\frac d{dr}\left(\int_{\p B_1}\frac{u_\theta u_{r\theta}}{r^3}\right)
+\int_{\p B_1}\frac{u_{r\theta}^2}{r^3}-
\int_{\p B_1}\frac{u_\theta u_{\theta r}}{r^4}
-2\int_{\p B_1}\frac{u_\theta^2}{r^5}\\
= 
&
-\frac d{dr} \left(\int_{\p B_1} \frac{u_\theta u_{r\theta}}{r^3}\right)
+\int_{\p B_1}\frac1{r^3}\left( u_{\theta r}-\frac{2 u_\theta}r \right)^2
+
3\int_{\p B_1}\frac{u_\theta u_{\theta r}}{r^4}
-6\int_{\p B_1}\frac{u_\theta^2}{r^5}\\
=&
-\frac d{dr} \left( \int_{\p B_1} \frac{u_\theta u_{r\theta}}{r^3}
+ \frac32\int_{\p B_1}\frac{u_\theta^2}{r^4}\right)
+\int_{\p B_1}\frac1{r^3}\left( u_{\theta r}-\frac{2 u_r}r \right)^2
\\=&
-\frac d{dr} \left( \int_{\p B_r} \frac{u_\theta u_{r\theta}}{r^4}
+ \frac32\int_{\p B_r}\frac{u_\theta^2}{r^5}\right)
+\int_{\p B_r}\frac1{r^4}\left( u_{\theta r}-\frac{2 u_r}r \right)^2
.\end{split}
\end{equation}
Now we take into account the term~$B(r)$. To this end,
from~\eqref{7wqtfychv78rtef7465y45ihgbksajgdf},
we see that
\begin{equation}\label{BIR}
\begin{split}
B(r)=&
\int_{\p B_1}\frac1r\left(
u_{rr}^2-\frac{2u_{rr}u_r}r+\frac{2uu_{rr}}{r^2}+\frac{u_r u_{rr}}r-\frac{2 u_r^2}{r^2}
+\frac{2u u_r}{r^3}
\right)\\
=&
\int_{\p B_1}\frac1r\left(
u_{rr}^2-\frac{u_{rr}u_r}r+\frac{2uu_{rr}}{r^2}-\frac{2 u_r^2}{r^2}
+\frac{2u u_r}{r^3}
\right)\\
=&
\int_{\p B_1}\frac1r\left(u_{rr}-\frac{3u_r}r+4\frac u{r^2}\right)^2
+
\frac1r
\left(
\frac{5u_ru_{rr}}r-\frac{6uu_{rr}}{r^2}-\frac{11u_r^2}{r^2}+\frac{26u u_r}{r^3}
-\frac{16u^2}{r^4}
\right)\\
=&
\int_{\p B_1}\frac1r\left(u_{rr}-\frac{3u_r}r+4\frac u{r^2}\right)^2
+\frac d{dr}\left(\int_{\p B_1}\frac{5u_r^2}{2r^2}
-\int_{\p B_1}\frac{6u u_r}{r^3}+
\int_{\p B_1}\frac{4u^2}{r^4}\right)\\
=&
\int_{\p B_r}\frac1{r^2}\left(u_{rr}-\frac{3u_r}r+4\frac u{r^2}\right)^2
+\frac d{dr}\left(\int_{\p B_r}\frac{5u_r^2}{2r^3}
-\int_{\p B_r}\frac{6u u_r}{r^4}+
\int_{\p B_r}\frac{4u^2}{r^5}\right).
\end{split}
\end{equation}
Using~\eqref{BIR0} and~\eqref{BIR}, we conclude that
\begin{equation} \label{ABOAiw}
A(r)+B(r)=\frac1{r^2}
\int_{\p B_r}\left[\left( \frac{u_{\theta r}}r-\frac{2 u_r}{r^2} \right)^2+
\left(u_{rr}-\frac{3u_r}r+4\frac u{r^2}\right)^2\right]+W'(r),
\end{equation}
where
\begin{equation}\label{87333efgrhrggh6r8yfc8i21qrutyerod-12ytge}
W(r):=
\int_{\p B_r}\left( \frac{5u_r^2}{2r^3}-\frac{6u u_r}{r^4}+
\frac{4u^2}{r^5}
- \frac{u_\theta u_{r\theta}}{r^4}
- \frac{3u_\theta^2}{2r^5}\right).
\end{equation}
On the other hand, in view of~\eqref{DV-1} 
and~\eqref{ijNAYNTE},
\begin{eqnarray*}&&
-4V(r_2)+4V(r_1)
+2T(r_2)-2T(r_1)
+D(r_2)-D(r_1)\\&=&-
4\int_{r_1}^{r_2}
\left( \frac1{r^n}\int_{\partial B_r}\Delta u\,\Big(2\frac{u_r}{r}
- \partial^2_r u- 2\frac{u}{r^2}\Big) 
-\frac1{r^{n+1}}\int_{B_r} \Delta^2 u\,u
\right)\,dr
\\&=& 4\int_{r_1}^{r_2}\big(
A(r)+B(r)\big)\,dr
+\int_{r_1}^{r_2}\frac4{r^{3}}\int_{B_r} \Delta^2 u\,u
.\end{eqnarray*}
Consequently, by~\eqref{ABOAiw},
\begin{equation}\label{MAHijyhJAHNNA}\begin{split}
&-V(r_2)+V(r_1)
+\frac{T(r_2)-T(r_1)}2
+\frac{D(r_2)-D(r_1)}4-W(r_2)+W(r_1)\\
=\;&
\int_{r_1}^{r_2}\left\{
\frac1{r^2}
\int_{\p B_r}\left[\left( \frac{u_{\theta r}}r-\frac{2 u_r}{r^2} \right)^2+
\left(u_{rr}-\frac{3u_r}r+4\frac u{r^2}\right)^2\right]\right\}
+\int_{r_1}^{r_2}\frac1{r^{3}}\int_{B_r} \Delta^2 u\,u
.
\end{split}\end{equation}
Recalling \eqref{eq-sing-pert},
\eqref{EMMEDE}, \eqref{R1r}, \eqref{DV-2} and \eqref{87333efgrhrggh6r8yfc8i21qrutyerod-12ytge},
we see that
\begin{eqnarray*}
&&-V(r)
+\frac{T(r)}2
+\frac{D(r)}4-\int_{\p B_r}\left( \frac{5u_r^2}{2r^3}-\frac{6u u_r}{r^4}+
\frac{4u^2}{r^5}
- \frac{u_\theta u_{r\theta}}{r^4}
- \frac{3u_\theta^2}{2r^5}\right)-
\int_{0}^{r}\frac1{\rho^{3}}\int_{B_\rho} \Delta^2 u\,u\\
&=&-\frac1{r^{3}}\int_{\partial B_r}\Delta u u
+\frac1{2r^2}\int_{\partial B_{r}} \Delta u\,\partial_ru
+\frac1{4r^2}\int_{B_r} \big( |\Delta u|^2+\mathcal{B}_\e(u)\big)
\\&&\qquad-
\int_{\p B_r}\left( \frac{5u_r^2}{2r^3}-\frac{6u u_r}{r^4}+
\frac{4u^2}{r^5}
- \frac{u_\theta u_{r\theta}}{r^4}
- \frac{3u_\theta^2}{2r^5}\right)+\int_{0}^{r}\frac1{\rho^{3}}\int_{B_\rho} \beta_\e( u)\,u
\\
&=& E(r).
\end{eqnarray*}
This and~\eqref{MAHijyhJAHNNA} establish the desired
claim in~\eqref{MONOFORMULA}. \end{proof}


\section{Strong convergence of $\Delta u^{\e_j}$ and proof of Theorem \ref{thm:strong-lap}}\label{SHH:SSSA}

This section is devoted to the proof of Theorem \ref{thm:strong-lap}.
To this end, we start by proving the strong convergence claimed
in~\eqref{STRONG:C}.

\begin{proof}[Proof of~\eqref{STRONG:C}]
Our aim is to show that
\begin{equation}\label{6:9192}
\lim_{j\to+\infty} \int_\Omega (\Delta u^{\e_j})^2\le\int_\Omega (\Delta u)^2.
\end{equation}
To prove this, we take~$\eta\in C_0^\infty(\Om,\,[0,1])$, and we see that 
\begin{equation}\label{16:9192}
-\int_\Omega \eta u^{\e_j}\Delta ^2 u^{\e_j}
=
\int_\Omega\Delta u^{\e_j}\Delta(\eta u^{\e_j})=
\int_\Omega \eta (\Delta u^{\e_j})^2+\Delta u^{\e_j}(2\na \eta \na u^{\e_j}+u^{\e_j}\Delta \eta ).
\end{equation}
Moreover, supposing that~$\eta$ is supported in some~$B\Subset\Omega$,
we have that
\begin{eqnarray*}
\left| \int_\Omega \eta u^{\e_j}\Delta ^2 u^{\e_j}\right| &\le&
\frac{\|\eta\|_{L^\infty(B)}}{2}\,\int_B |u^{\e_j}|\,\beta_{\e_j}(u^{\e_j})\\
&=&
\frac{\|\eta\|_{L^\infty(B)}}{2\e_j}\,\int_{B 
\cap\{0<u^{\e_j}\le\e_j\}}
|u^{\e_j}|\,\beta\left(\frac{u^{\e_j}}{\e_j}\right)\\
&\le& \frac{1}{2}\,\|\eta\|_{L^\infty(B)}\,\displaystyle\sup_{[0,1]}\beta
\;|\{0<u^{\e_j}\le\e_j\}\cap B|
\end{eqnarray*}
which is infinitesimal as~$j\to+\infty$, thanks to~\eqref{Cg09}.

Consequently, recalling~\eqref{16:9192},
\begin{equation}\label{fcwit-0-P}
\lim_{j\to+\infty}\int_\Omega\eta(\Delta u^{\e_j})^2=-\lim_{j\to+\infty} \int_\Omega \Delta u^{\e_j}
(2\na \eta \cdot\na u^{\e_j}+u^{\e_j}\Delta \eta ).
\end{equation}
Furthermore,
\begin{eqnarray*}&&
\lim_{j\to+\infty} \left|\int_\Omega \Delta u^{\e_j}\na \eta\cdot \na u^{\e_j}
-\int_\Omega \Delta u\na \eta \cdot\na u\right|\\
&\le& \lim_{j\to+\infty} \left|\int_\Omega (\Delta u^{\e_j}-\Delta u)\na \eta\cdot \na u\right|+
\int_\Omega |\Delta u^{\e_j}|\,|\na \eta|\,| \na u^{\e_j}-\na u|
\\ &\le&\lim_{j\to+\infty} \|\eta\|_{C^1(B)}\,\|\Delta u^{\e_j}\|_{L^2(B)}\,
\|\nabla(u^{\e_j}-u)\|_{L^2(B)}\\
&=&0
,\end{eqnarray*}
thanks to the weak convergence of $\Delta u^{\e_j}$
and the Sobolev embedding, and, similarly,
\begin{eqnarray*}
\lim_{j\to+\infty} \left|\int_\Omega \Delta u^{\e_j}u^{\e_j}\Delta \eta -
\int_\Omega \Delta u^{\e_j}u^{\e_j}\Delta \eta \right|=0.
\end{eqnarray*}
These observations and~\eqref{fcwit-0-P}
yield that
\begin{equation}\label{fcwit-0=Q}
\lim_{j\to+\infty}\int_\Omega\eta(\Delta u^{\e_j})^2=
-\int_\Omega \Delta u(2\na \eta\cdot \na u+u\Delta \eta ).
\end{equation}
By Stampacchia's Theorem, we also know that~$\na u=0$ a.e. in~$\{u=0\}$,
hence we can write~\eqref{fcwit-0=Q} in the form
\begin{equation}\label{fcwit-0}
\lim_{j\to+\infty}\int_\Omega\eta(\Delta u^{\e_j})^2=
-\int_{\{u>0\}} \Delta u(2\na \eta\cdot \na u+u\Delta \eta ).
\end{equation}
Next, we exploit the Sard Theorem in Sobolev spaces (see~\cite{MR1871360})
to see that~$\{u>s_k\}=\partial{\{ u>s_k\}}$ has smooth boundary, for an infinitesimal
sequence~$s_k$.
Hence, after some integrations by parts,
\begin{eqnarray}\nonumber
\int_{\{ u>s_k\}}\eta(\Delta u)^2
&=&
\int_{\{ u=s_k\}}\eta \p_\nu u\,\Delta u-\int_{\{ u>s_k\}}\na u\cdot\na (\Delta u\eta)\\\nonumber
&=& 
\int_{\{ u=s_k\}}\eta \p_\nu u\Delta u-
\int_{\{ u=s_k\}}(u-s_k) \p_\nu (\eta \Delta u )+\int_{\{u>s_k\}} (u-s_k)\Delta (\Delta u\eta)\\\nonumber
&=&
\int_{\{ u=s_k\}}\eta \p_\nu u\Delta u
+\int_{\{  u>s_k\}}(u-s_k)(2\na \Delta u\cdot\na \eta+\Delta u\Delta\eta)\\\nonumber
&=&
\int_{\{ u=s_k\}}\eta \p_\nu u\Delta u
+2\int_{\{ u=s_k\}}( u-s_k)\p_\nu\eta  \Delta u 
-\int_{\{  u>s_k\}} 2\na u \cdot\na \eta \Delta u+(u-s_k)\Delta u\Delta\eta\\\label{fcwit}
&=&
\int_{\{ u=s_k\}}\eta \p_\nu u\Delta u
-\int_{\{  u>s_k\}} 2\na u \cdot\na \eta \Delta u+(u-s_k)\Delta u\Delta\eta,
\end{eqnarray}
where $\nu$ is the exterior normal to~$\{u>s_k\}$.
As a technical detail, we point out that the term~$\partial_\nu \Delta u$
is not really well defined in our setting, hence, to justify~\eqref{fcwit},
one should first approximate~$u$ with a mollification
and then take limit.

Now, we claim that
\begin{equation}\label{7jA923847y6}
\lim_{k\to+\infty}\int_{\{ u=s_k\}}\eta \p_\nu u\Delta u=0.\end{equation}
To see this we recall \eqref{QUAD-lap} and we find that
\begin{equation}\label{jaGSHs}
\begin{split}&
\int_{\{ u=s_k\}}\eta \p_\nu u\Delta u
=
\int_{\{ u=s_k\}}\eta \p_\nu u(\Delta u+C)-C\int_{\{ u=s_k\}}\eta \p_\nu u\\ &\qquad
=
-\int_{\{ u=s_k\}}\eta |\na u|(\Delta u+C)-C\int_{\{ u=s_k\}}\eta \p_\nu u. 
\end{split}\end{equation}
Moreover, we observe 
that
\begin{eqnarray*}
\int_{\{ u=s_k\}}\eta  \p_\nu u
&=&
\int_{\{ u>s_k\}}\div(\eta\na  u)\\
&=&
 \int_{\{ u>s_k\}}\na \eta\cdot\na u+\eta\Delta u,\end{eqnarray*}
and, thus, taking limit,
\begin{equation}\label{fck-wit}
\lim_{k\to+\infty}
\int_{\{ u=s_k\}}\eta  \p_\nu u
=  \int_{\{ u>0\}}\na \eta\cdot\na u+\eta\Delta u
=
\int_{\fb { u}}\eta  \p_\nu u=0.
\end{equation}
Also, in light of~\eqref{QUAD-lap},
\begin{eqnarray*}
0
&\le&
 \int_{\{u=s_k\}}\eta |\na  u|(\Delta u+C)\;\le\;
\Big(\sup_B|\Delta u|+C\Big)\int_{\{u=s_k\}}\eta |\na  u|\\
&=&
-\Big(\sup_B|\Delta u|+C\Big)\int_{\{ u=s_k\}}\eta  \p_\nu u,
\end{eqnarray*}
and therefore
$$ \lim_{k\to+\infty}\int_{\{u=s_k\}}\eta |\na  u|(\Delta u+C)=0,$$
thanks to~\eqref{fck-wit}.

Using this, \eqref{jaGSHs} and~\eqref{fck-wit},
we establish~\eqref{7jA923847y6}, as desired.

Then, combining \eqref{fcwit} with~\eqref{7jA923847y6}, we conclude that
\begin{equation*}\begin{split} 
\int_{\{ u>0\}}\eta(\Delta u)^2\,&=
\lim_{k\to+\infty}
\int_{\{ u>s_k\}}\eta(\Delta u)^2\\&
=-\lim_{k\to+\infty}
\int_{\{  u>s_k\}} 2\na u \cdot\na \eta \Delta u+(u-s_k)\Delta u\Delta\eta\\&
=-
\int_{\{  u>0\}} 2\na u \cdot\na \eta \Delta u+u\Delta u\Delta\eta.
\end{split}\end{equation*}
{F}rom this and~\eqref{fcwit-0}, we see that 
\begin{equation}\label{098708765}
\lim_{j\to +\infty}\int_\Om\eta(\Delta u^\e_j)^2=\int_{\{ u>0\}}\eta(\Delta u)^2\le \int_{\Om}( \Delta u)^2.
\end{equation}
Furthermore, fixing~$\delta>0$ and taking~$\eta$ such that~$|\Omega\setminus
\{\eta=1\}|\le\delta$, recalling~\eqref{QUAD-lap}
we see that
$$
\left|\int_\Om(1-\eta)(\Delta u^\e_j)^2\right|\le
C^2\delta.$$
{F}rom this and~\eqref{098708765} we thereby obtain that
$$
\lim_{j\to +\infty}\int_\Om
(\Delta u^\e_j)^2\le C^2\delta+ \int_{\Om}( \Delta u)^2.
$$ 
Hence, by taking~$\delta$ as small as we wish, we complete the proof of~\eqref{6:9192}.

The weak convergence of~$\Delta u^{\e_j}$ also implies that
\begin{eqnarray*}&&
\lim_{j\to+\infty} \int_\Omega (\Delta u^{\e_j})^2=
\lim_{j\to+\infty} \int_\Omega (\Delta u^{\e_j}-\Delta u)^2+2\Delta u^{\e_j}\Delta u-(
\Delta u)^2
\\ &&\qquad\ge
\lim_{j\to+\infty} \int_\Omega 2\Delta u^{\e_j}\Delta u-(
\Delta u)^2=
\int_\Omega (\Delta u)^2.
\end{eqnarray*}
This and~\eqref{098708765} give that
$$ \lim_{j\to+\infty} \int_\Omega (\Delta u^{\e_j})^2=
\int_\Omega (\Delta u)^2,$$
which in turn implies~\eqref{STRONG:C}.
\end{proof}

\begin{proof}[Strong convergence of Hessian and proof of \eqref{STRONG:H}]
It is easy to check that 
\begin{eqnarray}
\int_\Om u_{ii}^\e u_{jj}^\e\eta
&=&
-\int_\Om u_i^\e(u_{jji}^\e\eta+u_{jj}^\e\eta_i)=
-\int_\Om u_i^\e u_{jj}^\e\eta_i+\int_\Om (u_{ij}^\e)^2\eta+u_i^\e\eta_ju_{ij}^\e.
\end{eqnarray}
Hence the strong convergence of the Hessian follows from 
the strong convergence of the Laplacian in~\eqref{STRONG:C}. 
Taking limits, this proves~\eqref{STRONG:H}. 
\end{proof}

\begin{proof}[Proof of the boundedness of $E$, of~\eqref{STRONG:EE},
and of~\eqref{VSmciiririMONOFORMULA}]
By~\eqref{QUAD}, we know that
\begin{equation*}
|u (x)|\le \wh C\,|x|^2,\qquad |\nabla u(x)|\le \wh C\,|x|\qquad{\mbox{ and }}\qquad
|D^2 u(x)|\le \wh C,
\end{equation*}
for all~$x\in B_{1/2}$, for a suitable~$\wh C>0$.
This gives that the function~$E$ in~\eqref{67A02333} is well defined and bounded.

We now prove~\eqref{STRONG:EE}. This
is somehow a delicate point, since one cannot simply take the limit of
the function~$E^\e$ since the last term in~\eqref{EMMEDE}
is not necessarily infinitesimal in~$\e$ (this possible
pathology can be understood,
for instance, by making a direct computation assuming that~$u^\e$ is quadratic).
To cope with this difficulty, it is convenient to define
\begin{eqnarray*} \tilde E^\e(r)&:=&
\int_{\partial B_r}
\left(\frac{\Delta u^\e\,u^\e_r}{2r^2}-
\frac{5(u^\e_r)^2}{2r^3}
-\frac{\Delta u^\e u^\e}{r^3}+\frac{6u^\e u^\e_r}{r^4}+
\frac{u^\e_{\theta}u^\e_{\theta r}}{r^4}
-\frac{4(u^\e)^2}{r^5}
-\frac{3(u_\theta^\e)^2}{2r^5}
\right)\\&&\qquad\qquad\qquad
+\frac1{4r^2}\int_{B_r} \big( |\Delta u^\e|^2+\mathcal{B}_\e(u^\e)\big).\end{eqnarray*}
By~\eqref{EMMEDE}, we have that
\begin{equation} \label{C6Aelel}
E^\e(r)=\tilde E^\e(r)+
\int_{0}^{r}\frac1{\rho^{3}}\int_{B_\rho} \beta_\e(u^\e)\,u^\e
.\end{equation}
Moreover, by the strong convergence of the Hessian that we have just proved, we know
that
\begin{equation}\label{C6Aelel2}
\lim_{j\to+\infty} \tilde E^{\e_j}(r)=E(r).
\end{equation}
We now fix~$\tau_2>\tau_1>0$, with~$B_{\tau_2}\Subset\Omega$. Then, we have that
\begin{eqnarray*}\left|
\int_{\tau_1}^{\tau_2}\frac1{\rho^{3}}\int_{B_\rho} \beta_\e(u^\e)\,u^\e\right|&\le&
\int_{\tau_1}^{\tau_2}\frac1{\rho^{3}}\int_{B_\rho\cap\{ 0<u^\e\le\e\}}
\beta\left(\frac{u^\e}\e\right)\,\frac{
u^\e}{\e}\\&\le& \displaystyle\sup_{[0,1]}\beta\,\frac{1}{\tau_1^{3}}\,
\int_{\tau_1}^{\tau_2}|B_\rho\cap\{ 0<u^\e\le\e\}|
\\&\le& \displaystyle\sup_{[0,1]}\beta\,\frac{\tau_2-\tau_1}{\tau_1^{3}}\,
|B_{\tau_2}\cap\{ 0<u^\e\le\e\}|.
\end{eqnarray*}
As a consequence, by~\eqref{Cg09}, 
$$ \lim_{j\to+\infty}\int_{\tau_1}^{\tau_2}\frac1{\rho^{3}}\int_{B_\rho} \beta_{\e_j}(u^{\e_j}
)\,u^{\e_j}=0.$$
Using this, \eqref{C6Aelel} and~\eqref{C6Aelel2}, we thus conclude that
\begin{equation}\label{879183:921324}\begin{split}& E(\tau_2)-E(\tau_1)=
\lim_{j\to+\infty} \tilde E^{\e_j}(\tau_2)-\tilde E^{\e_j}(\tau_1)=
\lim_{j\to+\infty}
E^{\e_j}(\tau_2)-E^{\e_j}(\tau_1)-
\int_{\tau_1}^{\tau_2}\frac1{\rho^{3}}\int_{B_\rho} \beta_{\e_j}(u^{\e_j})\,u^{\e_j}\\&\qquad=
\lim_{j\to+\infty}
E^{\e_j}(\tau_2)-E^{\e_j}(\tau_1).\end{split}\end{equation}
{F}rom this and~\eqref{MONOFORMULA}
we obtain~\eqref{STRONG:EE}, as desired.

Also, using~\eqref{879183:921324}, \eqref{MONOFORMULA}
and the strong convergence of the Hessian,
we obtain~\eqref{VSmciiririMONOFORMULA}.
\end{proof}

\begin{proof}[Proof of \eqref{VSmciiririMONOFORMULA:2}]
If~$E$ is constant in~$(0,\tau)$, we deduce
from~\eqref{VSmciiririMONOFORMULA} that
\begin{eqnarray*}
&& -\frac{\partial}{\partial \theta}
\left(-\frac{u_r}{r}+ \frac{2u}{r^2}\right)
=\frac{u_{r\theta}}{r^2}-\frac{2u_\theta}{r}=0\\
{\mbox{and }} && -r\,\frac{\partial}{\partial r}
\left(-\frac{u_r}{r}+\frac{2u}{r^2}\right)=u_{rr}-\frac{3u_r}{r}+
\frac {4u}{r^2}=0.
\end{eqnarray*}
As a consequence, we have that
$$\na \left(-\frac{u_r}{r}
+2 \frac{u}{r^2}\right)=0,$$ which implies that the function~$
-\frac{u_r}{r}+ \frac{2u}{r^2}$ is constant for~$|x|\in(0,\tau)$.

Accordingly, we see that
\begin{equation}\label{ODE} -\frac{u_r}{r}+ \frac{2u}{r^2}=c,\end{equation}
for some~$c\in\R$.
Let now
\begin{equation}\label{ODE2} v(r,\theta):=u(r,\theta)+cr^2\log r.\end{equation}
{F}rom~\eqref{ODE}, we have
$$ v_r=u_r+2cr\log r+cr=\frac{2u}r+2cr\log r=\frac{2v}r.
$$
Integrating this equation, fixed~$\bar r\in(0,\tau)$, we find that
$$ v(r,\theta)=\frac{r^2\,v(\bar r,\theta)}{\bar r^2}.$$
This and~\eqref{ODE2} give that
$$ u(r,\theta)=\frac{r^2\,v(\bar r,\theta)}{\bar r^2}-cr^2\log r.$$
Hence, recalling~\eqref{QUAD},
$$ C\ge\frac{|u(r,\theta)|}{r^2}\ge |c|\,|\log r|-\frac{|v(\bar r,\theta)|}{\bar r^2},$$
and therefore
$$ |c|\le\lim_{r\to0} \frac{|v(\bar r,\theta)|}{\bar r^2\,|\log r|}+\frac{C}{|\log r|}=0.$$
This gives that~$c=0$ and in this way
we can write~\eqref{ODE} as~$ -\frac{u_r}{r}+ \frac{2u}{r^2}=0$,
or, equivalently,~$\na u (x)\cdot x=2u(x)$ for any~$x\in B_\tau$. 
The latter is
the Euler equation for homogeneous functions of degree two, and accordingly
we find that~$u$ is necessarily homogeneous of degree two.\end{proof}

\begin{proof}[Proof of~\eqref{thm:strong-lap-EQ}] 
The proof of~\eqref{thm:strong-lap-EQ} is now standard (for instance,
one can repeat the argument in the proof of Theorem~1.14 in~\cite{BIHA}).
The proof of Theorem~\ref{thm:strong-lap} is thereby complete.
\end{proof}

\section{Quadratic detachment: proof of Theorem~\ref{ALFAGA}}
\label{sec:det}

The proof of Theorem~\ref{ALFAGA} relies on the integral
identity in Lemma~\ref{IDENDOM}, and it goes as follows:

\begin{proof}[Proof of Theorem~\ref{ALFAGA}] We let~$\psi\in C^\infty_0(\Omega)$ and we exploit~\eqref{DOMAIN}
with~$\phi(x):=\big(\psi(x),0,\dots,0\big)$. In this way, we obtain that
\begin{equation}\label{DOMsdfgAIN} \int_\Omega \left[2\left(2\sum_{j=1}^n u^\e_{1j}\psi_j
+u^\e_1\Delta\psi\right)\,\Delta u^\e
-\psi_1\,\Big( |\Delta u^\e|^2+ \mathcal B_\e(u^\e)\Big)\right]=0.\end{equation}
We also remark that
\begin{equation}\label{CONxS:2}
\lim_{\e\to0^+} {\mathcal{B_\e}}\big(u^\e(x)\big)=
\begin{cases}
1 & {\mbox{ if }} x\in\{u>0\},\\
0 & {\mbox{ if }} x\in\{u<0\}.\end{cases}                             
\end{equation}
Indeed, if~$u(x)>0$, we have that~$u^\e(x)>\frac{u(x)}{2}>\e$ if~$\e$ is small enough and hence,
in view of~\eqref{BESP}, we know that~${\mathcal{B_\e}}\big(u^\e(x)\big)=1$.
Conversely, if~$u(y)<0$, we have that~$u^\e(y)<0$ for small~$\e$ and thus
$$ {\mathcal{B_\e}}\big(u^\e(y)\big)=\int_0^{\frac{u^\e(y)}\e}\beta(t)\,dt=0,$$
since~$\beta=0$ in~$\left( \frac{u^\e(y)}\e,0\right)$. These observations
establish~\eqref{CONxS:2}.

Thanks to~\eqref{bigbetabound} and the Dominated Convergence Theorem,
we can take limits inside the integral and find that
$$ \lim_{\e\to0^+}
\int_{\Omega\cap\{u\ne0\}} \psi_1\,\mathcal B_\e(u^\e)=
\int_{\Omega\cap\{u>0\}}\psi_1
.$$
Plugging this identity inside~\eqref{DOMsdfgAIN}, and exploiting the convergence
in~\eqref{CONW22}, we conclude that
\begin{equation}\label{DOMsdfgAIN-BIS}\begin{split}
0 \,&=\lim_{\e\to0^+}
\int_\Omega \left[
2\left(2\sum_{j=1}^n u^\e_{1j}\psi_j+u^\e_1\Delta\psi\right)\,\Delta u^\e
-\psi_1\,\Big( |\Delta u^\e|^2+ \mathcal B_\e(u^\e)\Big)
\right]
\\ &=
\int_\Omega \left[
2\left(2\sum_{j=1}^n u_{1j}\psi_j+u_1\Delta\psi\right)\,\Delta u
-\psi_1\,|\Delta u|^2
\right]-\lim_{\e\to0^+}\int_{\Omega\cap\{u=0\}}\psi_1\,\mathcal B_\e(u^\e)
-\int_{\Omega\cap\{u>0\}}\psi_1
.\end{split}\end{equation}
Now, since in all the cases under consideration~$\{u=0\}$ has zero Lebesgue
measure, we can write~\eqref{DOMsdfgAIN-BIS} as
\begin{equation}\label{DOMsdfgAIN-TRIS}
0 =
\int_\Omega \left[
2\left(2\sum_{j=1}^n u_{1j}\psi_j+u_1\Delta\psi\right)\,\Delta u
-\psi_1\,|\Delta u|^2
\right]
-\int_{\Omega\cap\{u>0\}}\psi_1
.\end{equation}
In addition, from~\eqref{CONW22}, we know that
$$ 
u_1=\alpha x_1\chi_{\{x_1>0\}}+\gamma x_1\chi_{\{x_1<0\}}\qquad{\mbox{ and }}\qquad
\Delta u=u_{11}=\alpha\chi_{\{x_1>0\}}+\gamma\chi_{\{x_1<0\}}\qquad{\mbox{ a.e. in $\Omega$,}}$$
and~$u_{j1}=0$ if~$j\ne1$,
therefore~\eqref{DOMsdfgAIN-TRIS} becomes
\begin{equation}\label{DOMsdfgAIN-QUARIS}\begin{split}
0 \,&=
\int_{\Omega\cap\{x_1>0\}} \left[
\left(4\alpha\psi_1+2\alpha x_1\Delta\psi\right)\,\alpha
-\psi_1\,\alpha^2
\right]\\&\quad+\int_{\Omega\cap\{x_1<0\}} \left[
\left(4 \gamma\psi_1+2\gamma x_1\Delta\psi\right)\,\gamma
-\psi_1\,\gamma^2
\right]
-\int_{\Omega\cap\{u>0\}}\psi_1
.\end{split}\end{equation}
Since
$$ \int_{\Omega\cap\{x_1>0\}} x_1\Delta\psi=
-\int_{\Omega\cap\{x_1>0\}} \nabla x_1\cdot\nabla\psi=
-\int_{\Omega\cap\{x_1>0\}} \psi_1,
$$
and similarly
$$ \int_{\Omega\cap\{x_1<0\}} x_1\Delta\psi=
-\int_{\Omega\cap\{x_1<0\}} \psi_1,
$$
we deduce from~\eqref{DOMsdfgAIN-QUARIS} that
\begin{equation}\label{DOMsdfgAIN-QUIS}
0 =
\alpha^2 \int_{\Omega\cap\{x_1>0\}}\psi_1+\gamma^2
\int_{\Omega\cap\{x_1<0\}} \psi_1
-\int_{\Omega\cap\{u>0\}}\psi_1
.\end{equation}
Now, if $\alpha$, $\gamma>0$, it follows that~$\Omega\cap\{u>0\}=
\Omega\cap\{x_1\ne0\}$ and consequently
\begin{equation}\label{8IUJA:01} \int_{\Omega\cap\{u>0\}}\psi_1=
\int_{\Omega\cap\{x_1\ne0\}}\psi_1=
\int_{\Omega}\psi_1=0.\end{equation}
On the other hand,
if~$\alpha$, $\gamma<0$, it follows that~$\Omega\cap\{u>0\}$ is void,
and consequently
\begin{equation}\label{8IUJA:02} \int_{\Omega\cap\{u>0\}}\psi_1=0.\end{equation}
Hence, in light of~\eqref{8IUJA:01} and~\eqref{8IUJA:02},
we see that
if either~$\alpha$, $\gamma>0$ or~$\alpha$, $\gamma<0$,
we can write~\eqref{DOMsdfgAIN-QUIS} as
$$ 0 =-\alpha^2
\int_{\Omega\cap\{x_1=0\}} \psi+\gamma^2
\int_{\Omega\cap\{x_1=0\}} \psi,$$
which leads to~\eqref{ARRCLA:1} and~\eqref{ARRCLA:1bis} in these cases.

If instead~$\alpha>0$ and~$\gamma\le0$, we have that~$\Omega\cap\{u>0\}=
\Omega\cap\{x_1>0\}$ and consequently, by~\eqref{DOMsdfgAIN-QUIS},
\begin{eqnarray*} 0& =&(\alpha^2-1)
\int_{\Omega\cap\{x_1>0\}} \psi_1+\gamma^2
\int_{\Omega\cap\{x_1<0\}} \psi_1\\
&=&-(\alpha^2-1)\int_{\Omega\cap\{x_1=0\}} \psi
+\gamma^2\int_{\Omega\cap\{x_1=0\}} \psi,
\end{eqnarray*}
which leads to~\eqref{ARRCLA:2}.

Furthermore, if~$\alpha<0$ and~$\gamma=0$, we have that~$
\Omega\cap\{u>0\}$ is void, and hence~\eqref{DOMsdfgAIN-QUIS}
gives that
\[ 0=\alpha^2
\int_{\Omega\cap\{x_1>0\}} \psi_1+\gamma^2
\int_{\Omega\cap\{x_1<0\}} \psi_1
=-\alpha^2\int_{\Omega\cap\{x_1=0\}} \psi.
\]
As a consequence, we find that~$\alpha=0$, against our assumption, and then we obtain~\eqref{ARRCLA:4}, as desired.
\end{proof}

\section{Counterexamples to
uniform $C^{1,1}$ bounds:
proofs of Theorems~\ref{CONTR} and~\ref{EXAMPLE}}\label{sec:contr}

Here we construct the one-dimensional
counterexamples
claimed in Theorem~\ref{CONTR},
using a suitable logarithmic bifurcation
from a quadratic function, and in Theorem~\ref{EXAMPLE}.

\begin{figure}
    \centering
    \includegraphics[width=12cm]{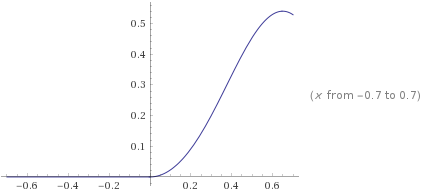}
    \caption{\em {{The counterexample constructed in~\eqref{UJACO3eirjeg} (here, with~$\e:=1/10$).}}}
    \label{CO}
\end{figure}

\begin{proof}[Proof of Theorem~\ref{CONTR}]
We let
\begin{equation}\label{UJACO3eirjeg} u^\e(x):=\begin{cases}
-x^2 \log(\e+x^4) & {\mbox{ if }} x>0,\\
0 & {\mbox{ if }} x\le 0,
\end{cases} \end{equation}
see Figure~\ref{CO}.

We observe that
$$ -\frac{x^4+\e}{2x}\,\frac{du^\e}{dx}(x)=
(x^4+\e) \log(x^4 + \e) + 2 x^4
\le (x^4+\e)\big(\log(x^4 + \e) + 2\big)
<0$$
if~$x\in(0,\sqrt[4]{e^{-2}-\e})$, and so in particular
if~$x\in(0,e^{-3/4})$ as long as~$\e$ is small enough.
This says that the function~$u^\e:(0,e^{-3/4})\to\R$
is strictly increasing and we denote by~$\zeta^\e$ its inverse.
We observe that
$$ u^\e(e^{-3/4})= -e^{-3/2}\log( e^{-3}+\e)\ge 2e^{-3/2}$$
as long as~$\e$ is sufficiently small, hence we can
define~$\zeta_\e$ in~$(0,2e^{-3/2})$.

In this way, for any~$t\in (0,2e^{-3/2})$, we can write that
$$ u^\e(\zeta^\e(t))=t.$$
We let~$\iota_\e\in(0,\e/2)$.
For all~$t\in(0,\iota_\e)$, we define
$$ \beta_\e(t):= -\frac{16 (\zeta^\e(t))^2 \Big(
(\zeta^\e(t))^{12} - 11 \e (\zeta^\e(t))^8 + 135 \e^2 (\zeta^\e(t))^4 - 45
\e^3\Big)
}{\big((\zeta^\e(t))^4 + \e\big)^4}.$$
We notice that
$$ \lim_{t\to0^+}\Big(
(\zeta^\e(t))^{12} - 11 \e (\zeta^\e(t))^8 + 135 \e^2 (\zeta^\e(t))^4 - 45
\e^3\Big)=
- 45\e^3<0,
$$
and hence we can suppose that
$\beta_\e\ge0$ 
in~$(0,\iota_\e)$, provided that~$\iota_\e$ is sufficiently small (possibly in dependence of~$\e$).
We can also extend~$\beta_\e$ to be smooth, zero outside~$(0,\e)$,
and with integral~$1$.

Then, we see that, when~$x>0$ is sufficiently small, 
$$ (u^\e)''''(x)=
-\frac{d^4}{dx^4}\big(x^2 \log(\e+x^4)\big)=
\frac{8 x^2 (x^{12} - 11 \e x^8  + 135\e^2 x^4  - 45 \e^3)}{
(x^4 + \e)^4}=-\frac12\beta_\e(u^\e(x)),$$
and so~$u^\e$ is a local solution of \eqref{eq-sing-pert}.
Nevertheless, it does not possess second derivative bounds
in $L^\infty$
that are uniform in~$\e$ since, for~$x>0$ sufficiently small,
$$ (u^\e)''(x)=
\frac{ 2 \big(6 x^8 + 14 \e x^4  + (x^4 + \e)^2 \log(x^4 + \e)
\big)}{(x^4 + \e)^2}$$
which converges to~$12+  8\log x$
as~$\e\to0^+$, and
$$ (u^\e)''(\sqrt[4]\e)=10  +2\log(2\e),$$
which becomes unbounded as~$\e\to0^+$.
\end{proof}

\begin{proof}[Proof of Theorem~\ref{EXAMPLE}] We take~$\phi\in C^\infty(\R,[0,+\infty))$
such that~$\phi=0$ in~$(-\infty,0]$, $\phi>0$ in~$(0,+\infty)$,
$\phi(k)=k^2$ for every~$k\in\N$ and
$$ \int_\R \phi(t)\,dt=1.$$
Let also
\begin{equation}\label{HAS:3434}
u(x):=\int_{-\infty}^x\phi(t)\,dt=\int_{0}^x\phi(t)\,dt.\end{equation}
We point out that
\begin{equation}\label{UHAN:01}
u=0 {\mbox{ in }}(-\infty,0].
\end{equation}
Moreover, we see that~$u'=\phi>0$ in~$(0,+\infty)$, 
and
\begin{equation}\label{UHAN:02}
\sup_\R u=\lim_{x\to+\infty}u(x)=\int_\R \phi(t)\,dt=1.
\end{equation}
Hence we can invert~$u\big|_{[0,+\infty)}$
and we denote its inverse by~$v$. In this way, $v:[0,1]\to[0,+\infty)$ and
for all~$x\in(0,+\infty)$ we have that
\begin{equation}\label{INVE}
v(u(x))=x.
\end{equation}
Now, for all~$t\in[0,1]$ we define
\begin{equation}\label{betade}
\beta(t):=-2 u''''(v(t)) .\end{equation}
Let also
$$ u^\e(x):= \e\,u\left( \frac{x}{\sqrt\e} \right).$$
Notice that
\begin{equation}\label{UHAN:03}\begin{split}
&{\mbox{if~$x\in(0,+\infty)$, then~$u^\e(x)\in (0,\e)$;}}\\
&{\mbox{moreover~$u^\e=0$ in~$(-\infty,0]$,}}\end{split}
\end{equation}
thanks to~\eqref{UHAN:01} and~\eqref{UHAN:02}.
In particular, the claims in~\eqref{90:A:001bis} and~\eqref{90:A:002}
follow from~\eqref{UHAN:03}.

It also follows from~\eqref{UHAN:03} that
$$ \frac{{u^\e}(x)}{\epsilon}\in(0,1) \qquad{\mbox{for all~$x>0$}},$$
and therefore, by~\eqref{betaeps} and~\eqref{betade}, we have that, for all~$x>0$,
\begin{equation}\label{ujNRFSDV}\begin{split}& -\beta_\e\left( {u^\e}(x)\right)
=-\frac1{\epsilon}\,\beta\left(\frac{{u^\e}(x)}{\epsilon}\right)
=\frac2{\epsilon}\, u''''\left(v\left(\frac{{u^\e}(x)}{\epsilon}\right)\right)
\\&\qquad=\frac2{\epsilon}\, u''''\left(v\left(
u\left( \frac{x}{\sqrt\e} \right)
\right)\right)
=\frac2{\epsilon}\, u''''\left(\frac{x}{\sqrt\e} \right)
=2 (u^\e)''''(x).\end{split}\end{equation}
In addition, from~\eqref{HAS:3434}, we see that
$$ u''''(0)=\phi'''(0)=\lim_{t\to0^-}\phi'''(t)=0.$$
This and \eqref{betade} give that
$$ \beta(0)=-2 u''''(v(0))=-2 u''''(0)=0.$$
Accordingly, from~\eqref{betaeps} and~\eqref{UHAN:03},
for all~$x\le0$,
$$ -2 (u^\e)''''(0)=-\frac2{\epsilon}\, u''''\left(\frac{0}{\sqrt\e} \right)
=0=\beta_\e(0)=\beta_\e\big(u^\e(x)\big).$$
This and~\eqref{ujNRFSDV} establish~\eqref{90:A:001}.

Finally,
$$ (u^\e)'(x)= {\sqrt\e}\,u'\left( \frac{x}{\sqrt\e} \right)=
{\sqrt\e}\,\phi\left( \frac{x}{\sqrt\e} \right).$$
Hence, defining~$\e_k:=k^{-2}$, we see that~$\e_k$ is as small as we wish for large~$k$,
and
$$ (u^{\e_k})'(1)= 
\frac1k\,\phi(k)=k,$$
which gives~\eqref{90:A:003}, as desired.
\end{proof}

\appendix

\section{Decay estimates for the gradient and the Hessian}

Here, we present some decay estimates for the gradient and the Hessian 
of solutions to~\eqref{eq-sing-pert}.

\begin{prop}
Suppose that~$u^\e$ is a solution
of~\eqref{eq-sing-pert} such that 
$|u^\e|\le 1$ in $\Omega$. Let $D\Subset \Om$ and $R_0\in\left(0,\dist(\overline{D}, \p\Omega)\right)$. Suppose that \begin{equation}
\label{lower}
\widehat C:= \inf\limits_{{x\in \overline D}\atop{ \e\in(0,1)}}\fint_{B_{R_0}(x)}\Delta u^\e>-\infty.\end{equation} Then, 
we have 
\begin{equation}\label{0oIANS}
\frac1{R^{n+2}}\int_{B_{R}(x_0)}|\na  u^\e|^2+\frac1{R^n}\int_{B_{R}(x_0)}|D^2 u^\e|^2
\le 
\frac C{R^{n+4}}\int_{B_{4R}(x_0)}(u^\e-m)^2+
\frac{\widehat C}{R^{n+2}}\int_{B_{4R}(x_0)}(u^\e-m), 
\end{equation}
for any~$x_0\in D$ and any~$R\in(0,R_0/4)$,
where 
\begin{equation}\label{Em}m=m^\e:=
\min_{B_{4R}(x_0)} u^\e,
\end{equation}
and
$C>0$ depends only on $n$.
\end{prop}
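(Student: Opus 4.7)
The plan is to prove~\eqref{0oIANS} by a Caccioppoli-type energy argument for the biharmonic operator. First, I would extract a pointwise lower bound on~$\Delta u^\e$. Since $\beta_\e\ge 0$, equation~\eqref{eq-sing-pert} gives $\Delta(\Delta u^\e)=-\tfrac12\beta_\e(u^\e)\le 0$, so $\Delta u^\e$ is superharmonic in $\Omega$; the mean value inequality for superharmonic functions, combined with hypothesis~\eqref{lower}, then yields
\[
\Delta u^\e(x)\;\ge\;\fint_{B_{R_0}(x)}\Delta u^\e\;\ge\;\widehat C
\qquad\text{for every }x\in\overline D.
\]
I also set $w:=u^\e-m\ge 0$ on $B_{4R}(x_0)$ and fix a standard cutoff $\eta\in C^\infty_c(B_{4R}(x_0))$ with $\eta=1$ on $B_R(x_0)$ and $|\nabla^k\eta|\le C\,R^{-k}$ for $k\in\{1,2\}$; in particular $|\nabla(\eta^4)|\le C\eta^3/R$ and $|\Delta(\eta^4)|\le C\eta^2/R^2$.

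The heart of the proof is testing the equation against $\eta^4w$. Since $\beta_\e w\eta^4\ge 0$, one has $\int \eta^4 w\,\Delta^2 u^\e\le 0$; two integrations by parts, together with $\Delta w=\Delta u^\e$ and the Leibniz expansion of $\Delta(\eta^4 w)$, yield
\[
\int \eta^4(\Delta u^\e)^2\;\le\;-2\int \Delta u^\e\,\nabla(\eta^4)\cdot\nabla u^\e-\int w\,\Delta(\eta^4)\,\Delta u^\e.
\]
The cross term with $\nabla u^\e$ is controlled by Young's inequality, producing $\delta\int\eta^4(\Delta u^\e)^2+C_\delta R^{-2}\int|\nabla u^\e|^2$ for any small $\delta>0$. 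For the remaining term, I would split $\Delta u^\e=\widehat C+(\Delta u^\e-\widehat C)$: the constant part gives $\widehat C\int w\,\Delta(\eta^4)$, which via $|\Delta(\eta^4)|\le C\eta^2/R^2$ is (up to a harmless constant) exactly the $\widehat C R^{-n-2}\int_{B_{4R}} w$ contribution in~\eqref{0oIANS}, while the remaining piece pairs the nonnegative factors $w$ and $\Delta u^\e-\widehat C$ and is bounded by $\delta\int\eta^4(\Delta u^\e)^2+C_\delta R^{-4}\int w^2$ after a further Young inequality. Absorbing the $\delta$-terms on the left yields the bound for $\int_{B_R}(\Delta u^\e)^2$.

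The gradient estimate comes from a simpler Caccioppoli: testing $\Delta u^\e$ against $\eta^2 w$ gives
\[
\int\eta^2|\nabla u^\e|^2=-\int\eta^2 w\,\Delta u^\e-2\int w\,\eta\,\nabla\eta\cdot\nabla u^\e,
\]
and Young's inequality together with the previously obtained bound on $\int\eta^4(\Delta u^\e)^2$ closes this estimate. To pass from the Laplacian to the full Hessian, I would use the integration-by-parts identity
\[
\int\eta^2|D^2u^\e|^2=\int\eta^2(\Delta u^\e)^2+2\int\eta\,\Delta u^\e\,\nabla\eta\cdot\nabla u^\e-2\int\eta\,\eta_j\,u^\e_i\,u^\e_{ij},
\]
whose error terms scale like $R^{-1}|\nabla u^\e||D^2u^\e|$ or $R^{-1}|\nabla u^\e||\Delta u^\e|$ and are absorbable by the bounds already produced; rescaling then yields~\eqref{0oIANS}.

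The main obstacle I anticipate is the term $\int w\,\Delta(\eta^4)\,\Delta u^\e$ in the main energy inequality: it has no definite sign, and in the absence of a uniform $L^\infty$ bound on $\Delta u^\e$ it cannot be controlled by a single Cauchy--Schwarz step so as to produce only a multiple of $R^{-4}\int w^2$ on the right. The decomposition $\Delta u^\e=\widehat C+(\Delta u^\e-\widehat C)$ is essential: the nonnegativity of $\Delta u^\e-\widehat C$ from the first step prevents loss of sharpness in Young's inequality, and it is precisely the constant piece $\widehat C\int w\,\Delta(\eta^4)$ that generates the linear-in-$w$ term on the right-hand side of~\eqref{0oIANS}.
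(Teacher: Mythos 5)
Your argument rests on the same two pillars as the paper's---the super-biharmonicity $\Delta^2 u^\e\le 0$ and the pointwise bound $\Delta u^\e\ge\widehat C$ coming from the mean value inequality---but it is packaged differently, and one step is more delicate than your write-up suggests. The paper tests the variational inequality $0\ge\int\Delta u^\e\,\Delta\phi=\sum_{i,j}\int u^\e_{ij}\phi_{ij}$ against $\phi=(u^\e-m)\eta^2$, which yields the full Hessian bound $\sum_{i,j}\int(u^\e_{ij})^2\eta^2\le CR^{-2}\int|\nabla u^\e|^2+CR^{-4}\int(u^\e-m)^2$ in one stroke; you instead first bound $\int\eta^4(\Delta u^\e)^2$ and then pass to the Hessian via a second integration-by-parts identity, which is fine but adds a step. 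The genuinely important difference is the gradient estimate. The paper's gradient Caccioppoli (cited as formula~(7.7) of~\cite{BIHA}) is closed immediately by using $-w\,\Delta u^\e\le-\widehat C\,w$ pointwise in the term $-\int\eta^2 w\,\Delta u^\e$; there is no reference to $\int(\Delta u^\e)^2$, so the gradient and Hessian estimates decouple.

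You instead control $-\int\eta^2 w\,\Delta u^\e$ by Young's inequality against $\int\eta^4(\Delta u^\e)^2$ and then ``close'' with your Laplacian estimate---but that Laplacian estimate itself contains the term $CR^{-2}\int\eta^2|\nabla u^\e|^2$, so you have produced a \emph{coupled} pair of inequalities, not a chain. The coupling can in fact be untangled: choose the Young parameter $\delta\sim R^2$, so that the product of the two absorption coefficients is strictly below one and $C_\delta\sim R^{-2}$, which reproduces exactly the $R^{-4}\int w^2$ and $R^{-2}\,\widehat C\int w$ scalings after dividing by $R^n$. This works, but the bookkeeping is nontrivial and should be spelled out rather than dismissed with ``closes this estimate.'' The cleaner route---and the one the paper takes---is to reuse the pointwise bound $\Delta u^\e\ge\widehat C$ once more inside the gradient Caccioppoli, i.e.\ the same decomposition $\Delta u^\e=\widehat C+(\Delta u^\e-\widehat C)$ you already exploited for the Laplacian, which removes the coupling entirely.
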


\begin{proof}
The proof follows from the argument used to prove Lemma A.1 in \cite{BIHA}.
We briefly sketch the argument here. Up to a translation,
we suppose~$x_0:=0$. {F}rom the super biharmonicity of $u^\e$ we get 
\[
0\ge \int_\Om\Delta u^\e\Delta \phi=\sum_{i,j=1}^n\int_\Om u_{ij}^\e\phi_{ij}
\]
for every $\phi\in C_0^\infty(B_{4R},[0,+\infty))$, where two integration by parts are performed in the latter step. 
Choosing~$\phi:=(u^\e-m^\e)\eta^2$, where~$m^\e$ is as in~\eqref{Em},
and~$\eta $ is a standard cut-off
function supported in~$B_{2R}\Subset\Omega$, such that~$ \eta=1$ in~$B_{R}$ and~$
\eta =0$ outside~$B_{2R}$ we get 
\begin{equation}\begin{split}\label{iu67363r:0}
\sum_{i,j=1}^n\int_\Om (u_{ij}^\e)^2\eta^2
\le\; &
\frac C{R^2}\int_{B_{2R}}|\na u^\e|^2
+\frac C{R^4}\int_{B_{2R}}(u^\e-m)^2,
\end{split}\end{equation}
for some universal constant $C>0$ (compare, e.g. with formula~(A.6) in~\cite{BIHA}).

On the other hand, using the mean value property $\Delta u^{\e}(x)\ge \fint_{B_r(x)}\Delta u^\e$ and the lower 
bound in~\eqref{lower}, we obtain the Caccioppoli-type inequality
\begin{equation}\label{NAML2}
\int_{B_{2R}}|\na u^\e|^2
\le \frac{C}{R^2}\,\int_{B_{4R}}
(u^\e - m^\e)^2+C\,\int_{B_{4R}}
(u^\e -m^\e),\end{equation}
see e.g. formula~(7.7) in~\cite{BIHA}. Combining~\eqref{iu67363r:0}
and~\eqref{NAML2}, we
finish the proof. 
\end{proof}

\begin{bibdiv}
\begin{biblist}


\bib{MR618549}{article}{
   author={Alt, H. W.},
   author={Caffarelli, L. A.},
   title={Existence and regularity for a minimum problem with free boundary},
   journal={J. Reine Angew. Math.},
   volume={325},
   date={1981},
   pages={105--144},
   issn={0075-4102},
   review={\MR{618549}},
}


\bib{caffa}{article}{
   author={Caffarelli, Luis A.},
   author={Friedman, Avner},
   title={The obstacle problem for the biharmonic operator},
   journal={Ann. Scuola Norm. Sup. Pisa Cl. Sci. (4)},
   volume={6},
   date={1979},
   number={1},
   pages={151--184},
   review={\MR{529478}},
}

\bib{MR620427}{article}{
   author={Caffarelli, Luis A.},
   author={Friedman, Avner},
   author={Torelli, Alessandro},
   title={The free boundary for a fourth order variational inequality},
   journal={Illinois J. Math.},
   volume={25},
   date={1981},
   number={3},
   pages={402--422},
   issn={0019-2082},
   review={\MR{620427}},
}

\bib{MR705233}{article}{
   author={Caffarelli, Luis A.},
   author={Friedman, Avner},
   author={Torelli, Alessandro},
   title={The two-obstacle problem for the biharmonic operator},
   journal={Pacific J. Math.},
   volume={103},
   date={1982},
   number={2},
   pages={325--335},
   issn={0030-8730},
   review={\MR{705233}},
}

%

\bib{MR1871360}{article}{
   author={de Pascale, Luigi},
   title={The Morse-Sard theorem in Sobolev spaces},
   journal={Indiana Univ. Math. J.},
   volume={50},
   date={2001},
   number={3},
   pages={1371--1386},
   issn={0022-2518},
   review={\MR{1871360}},
   doi={10.1512/iumj.2001.50.1878},
}


\bib{DK}{article}{
   author={Dipierro, Serena},
   author={Karakhanyan, Aram L.},
   title={Stratification of free boundary points for a two-phase variational
   problem},
   journal={Adv. Math.},
   volume={328},
   date={2018},
   pages={40--81},
   issn={0001-8708},
   review={\MR{3771123}},
}

\bib{selfdriven}{article}{
   author={Dipierro, Serena},
   author={Karakhanyan, Aram},
   author={Valdinoci, Enrico},
   title={A nonlinear free boundary problem with a self-driven Bernoulli
   condition},
   journal={J. Funct. Anal.},
   volume={273},
   date={2017},
   number={11},
   pages={3549--3615},
   issn={0022-1236},
   review={\MR{3706611}},
}

\bib{BIHA}{article}{
   author={Dipierro, Serena},
   author={Karakhanyan, Aram},
   author={Valdinoci, Enrico},
   title={A free boundary problem driven by the biharmonic operator},
  journal = {ArXiv e-prints},
archivePrefix = {arXiv},
   eprint = {1808.07696},
 primaryClass = {math.AP},
 keywords = {Mathematics - Analysis of PDEs},
     year = {2018},
   adsurl = {http://adsabs.harvard.edu/abs/2018arXiv180807696D},
  adsnote = {Provided by the SAO/NASA Astrophysics Data System}}


%

\bib{GANGULI}{book}{
    author = {Ganguli, Ranjan}, 
    title = {Finite element analysis of rotating beams.
{P}hysics based interpolation},
    isbn = {978-981-10-1901-2/hbk; 978-981-10-1902-9/ebook},
    pages = {xii+283},
    date = {2017},
    publisher = {Singapore: Springer},
    review = {ZBl1369.74001}
}

\bib{GAZ}{book}{
   author={Gazzola, Filippo},
   author={Grunau, Hans-Christoph},
   author={Sweers, Guido},
   title={Polyharmonic boundary value problems},
   series={Lecture Notes in Mathematics},
   volume={1991},
   note={Positivity preserving and nonlinear higher order elliptic equations
   in bounded domains},
   publisher={Springer-Verlag, Berlin},
   date={2010},
   pages={xviii+423},
   isbn={978-3-642-12244-6},
   review={\MR{2667016}},
}


\bib{GT}{book}{
   author={Gilbarg, David},
   author={Trudinger, Neil S.},
   title={Elliptic partial differential equations of second order},
   series={Grundlehren der Mathematischen Wissenschaften [Fundamental
   Principles of Mathematical Sciences]},
   volume={224},
   edition={2},
   publisher={Springer-Verlag, Berlin},
   date={1983},
   pages={xiii+513},
   isbn={3-540-13025-X},
   review={\MR{737190}},
   doi={10.1007/978-3-642-61798-0},
}

\bib{MR1954868}{article}{
   author={Kinnunen, Juha},
   author={Latvala, Visa},
   title={Lebesgue points for Sobolev functions on metric spaces},
   journal={Rev. Mat. Iberoamericana},
   volume={18},
   date={2002},
   number={3},
   pages={685--700},
   issn={0213-2230},
   review={\MR{1954868}},
   doi={10.4171/RMI/332},
}


\bib{MR3512704}{article}{
   author={Mardanov, R. F.},
   author={Zaripov, S. K.},
   title={Solution of Stokes flow problem using biharmonic equation
   formulation and multiquadrics method},
   journal={Lobachevskii J. Math.},
   volume={37},
   date={2016},
   number={3},
   pages={268--273},
   issn={1995-0802},
   review={\MR{3512704}},
}


\bib{mawi}{article}{
   author={Mawi, Henok},
   title={A free boundary problem for higher order elliptic operators},
   journal={Complex Var. Elliptic Equ.},
   volume={59},
   date={2014},
   number={7},
   pages={937--946},
   issn={1747-6933},
   review={\MR{3195921}},
}

\bib{MR866720}{article}{
   author={McKenna, P. J.},
   author={Walter, W.},
   title={Nonlinear oscillations in a suspension bridge},
   journal={Arch. Rational Mech. Anal.},
   volume={98},
   date={1987},
   number={2},
   pages={167--177},
   issn={0003-9527},
   review={\MR{866720}},
}

\bib{Monneau-W}{article}{
   author={Monneau, R.},
   author={Weiss, G. S.},
   title={An unstable elliptic free boundary problem arising in solid
   combustion},
   journal={Duke Math. J.},
   volume={136},
   date={2007},
   number={2},
   pages={321--341},
   issn={0012-7094},
   review={\MR{2286633}},
}

\bib{novaga1}{article}{
   author={Novaga, Matteo},
   author={Okabe, Shinya},
   title={Regularity of the obstacle problem for the parabolic biharmonic
   equation},
   journal={Math. Ann.},
   volume={363},
   date={2015},
   number={3-4},
   pages={1147--1186},
   issn={0025-5831},
   review={\MR{3412355}},
}

\bib{novaga2}{article}{
   author={Novaga, Matteo},
   author={Okabe, Shinya},
   title={The two-obstacle problem for the parabolic biharmonic equation},
   journal={Nonlinear Anal.},
   volume={136},
   date={2016},
   pages={215--233},
   issn={0362-546X},
   review={\MR{3474411}},
}

\bib{MR1900562}{article}{
   author={Petrosyan, Arshak},
   title={On existence and uniqueness in a free boundary problem from
   combustion},
   journal={Comm. Partial Differential Equations},
   volume={27},
   date={2002},
   number={3-4},
   pages={763--789},
   issn={0360-5302},
   review={\MR{1900562}},
   doi={10.1081/PDE-120002873},
}
		
\bib{pozzo}{article}{
   author={Pozzolini, C\'edric},
   author={L\'eger, Alain},
   title={A stability result concerning the obstacle problem for a plate},
   language={English, with English and French summaries},
   journal={J. Math. Pures Appl. (9)},
   volume={90},
   date={2008},
   number={6},
   pages={505--519},
   issn={0021-7824},
   review={\MR{2472891}},
}


\bib{SW}{article}{
   author={Sweers, Guido},
   title={A survey on boundary conditions for the biharmonic},
   journal={Complex Var. Elliptic Equ.},
   volume={54},
   date={2009},
   number={2},
   pages={79--93},
   issn={1747-6933},
   review={\MR{2499118}},
}

\bib{MR2584076}{article}{
   author={Valdinoci, Enrico},
   title={From the long jump random walk to the fractional Laplacian},
   journal={Bol. Soc. Esp. Mat. Apl. SeMA},
   number={49},
   date={2009},
   pages={33--44},
   issn={1575-9822},
   review={\MR{2584076}},
}

\bib{MR1620644}{article}{
   author={Weiss, Georg S.},
   title={Partial regularity for weak solutions of an elliptic free boundary
   problem},
   journal={Comm. Partial Differential Equations},
   volume={23},
   date={1998},
   number={3-4},
   pages={439--455},
   issn={0360-5302},
   review={\MR{1620644}},
   doi={10.1080/03605309808821352},
}

\end{biblist}
\end{bibdiv}

\vfill

\end{document}